 \def\markboth#1#2{%
  \begingroup
   \@temptokena{{#1}{#2}}\xdef\@themark{\the\@temptokena}%
   \mark{\the\@temptokena}%
  \endgroup
  \if@nobreak\ifvmode\nobreak\fi\fi}
 \def\thanks#1{\g@addto@macro\thankses{\thanks{#1}}
 }
\newtheorem{thm}{Theorem}
\newtheorem{conj}[thm]{Conjecture}
\newtheorem{lem}[thm]{Lemma}
\theoremstyle{definition}
\newcommand{\nt}{\par\noindent{\it Note. }}
\newcommand{\qu}{\par\noindent{\it Question. }}
\newcommand{\subtitle}[1]{\par\medskip\noindent{\bf #1.}}
\newcommand{\norm}[1]{\left\Vert#1\right\Vert}
\newcommand{\abs}[1]{\left\vert#1\right\vert}
\newcommand{\set}[1]{\left\{#1\right\}}
\newcommand{\bl}{\vspace{-\baselineskip}}
\newcommand{\ds}{\displaystyle}
\newcommand{\ts}{\textstyle}
\def\eps{\varepsilon}
\def\To{\longrightarrow}
\def\Real{\mathbb R}
\def\f{\mathcal F}
\def\g{\mathcal G}
\def\h{\mathcal H}
\def\t{\mathcal T}
\def\Tn{\mathcal {T}_n}
\def\Tl{\mathcal {T}_{\lambda}}
\def\Tnr{\mathcal {T}_n^{(r)}}
\def\Snr{\mathcal {S}_n^{(r)}}
\def\Pinr{\Pi_n^{(r)}}
\def\Tnkr{\mathcal {T}_{n,k}^{(r)}}
\def\Snkr{\mathcal {S}_{n,k}^{(r)}}
\def\Pinkr{\Pi_{n,k}^{(r)}}
\def\Tlr{\mathcal {T}_{\lambda}^{(r)}}
\def\Tnl{\mathcal {T}_{n,\lambda}}
\def\Tnlr{\mathcal {T}_{n,\lambda}^{(r)}}
\def\Slr{\mathcal {S}_{\lambda}^{(r)}}
\def\Pilr{\Pi_{\lambda}^{(r)}}
\def\Tloc{\mathcal {T}_{loc}^{(\lambda)}}
\def\Tglo{\mathcal {T}_{glo}^{(r,\lambda)}}
\def\Tlocpi{\mathcal {T}_{loc}^{(\pi)}}
\def\Tglopi{\mathcal {T}_{glo}^{(\pi)}}
\def\philoc{\phi_{loc}}
\def\phiglo{\phi_{glo}}
\def\ord{\mathop {\rm ord}}
\def\deg{\mathop {\rm deg}}
\def\indeg{{\mathop {\rm indeg}}}
\def\root{{\mathop {\rm root}}}
\def\sib{{\mathop {\rm sibship}}}
\def\sibglo{{\mathop {\rm sibship}}_{glo}}
\def\sibloc{{\mathop {\rm sibship}}_{loc}}
\def\type{{\mathop {\rm type}}}
\def\typeglo{{\mathop {\rm type}}_{glo}}
\def\typeloc{{\mathop {\rm type}}_{loc}}
\newcommand\qbin[2]{{#1 \brack #2}_{q}}
\newcommand\qmultinomial[2]{{#1 \brack #2}_{q}}
\def\area{{\mathop {\rm area}}}
\begin{document}
\title{A bijective enumeration of labeled trees with given indegree sequence}

\author{Heesung Shin}
\address{Universit\'e de Lyon; Universit\'e Lyon 1; Institut Camille Jordan, CNRS UMR 5208; 43 boulevard du 11 novembre 1918, F-69622 Villeurbanne Cedex, France}
\email{hshin@math.univ-lyon1.fr}

\author{Jiang Zeng}
\address{Universit\'e de Lyon; Universit\'e Lyon 1; Institut Camille Jordan, CNRS UMR 5208; 43 boulevard du 11 novembre 1918, F-69622 Villeurbanne Cedex, France}
\email{zeng@math.univ-lyon1.fr}

\subjclass[2000]{05A15}%
\date{\today}
\maketitle

\begin{abstract}
For a labeled  tree on the vertex set $\set{1,2,\ldots,n}$, the local direction of each
edge $(i\,j)$ is  from $i$ to $j$ if $i<j$.  For a rooted tree, there is also a natural global direction of edges towards the root.
The number of edges pointing to a vertex is called its indegree. Thus
the local (resp. global) indegree sequence $\lambda = 1^{e_1}2^{e_2} \ldots$ of a tree
on the vertex set $\set{1,2,\ldots,n}$ is a partition of $n-1$.
We construct a bijection from (unrooted) trees to rooted trees
such that the local indegree sequence of a (unrooted) tree equals the global indegree sequence of the corresponding rooted tree.
Combining with a Pr\"ufer-like code for rooted labeled trees, we obtain a bijective proof of a recent conjecture by Cotterill and also
solve two open problems proposed by Du and Yin.
We also prove a $q$-multisum binomial coefficient identity  which confirms another conjecture of Cotterill in a very special case.
\end{abstract}

\tableofcontents
\section{Introduction}
For an oriented tree $T$,  the {\em indegree} of a vertex $v$ is  the number of edges pointing to it and
 the sequence $(e_0, e_1, e_2, \ldots)$ is called the {\em type} of $T$
where  $e_h$ is the number of vertices of $T$ with  indegree $i$.
Since  $\sum_{i\ge 0} e_h$ (resp. $\sum_{i\ge 0} i e_h$)  is  the number of  vertices (resp. edges) of $T$,
we have $e_0 = 1+\sum_{i\ge 1} (i-1) e_h$. Hence
we can ignore $e_0$ while dealing  with types of trees because $e_{0}$ is determinated  by the others.
The partition $\lambda = 1^{e_1} 2^{e_2} \ldots$ will be called the {\em indegree sequence} of $T$.
Throughout this paper,  for any partition $\lambda = 1^{m_1} 2^{m_2} \ldots$,  we denote its length and weight by
$\ell(\lambda) = \sum_{i\ge 1} m_i$ and  $\abs{\lambda} = \sum_{i\ge 1} i m_i$.
Clearly,
if $\lambda$ is an indegree sequence of a tree  on $[n]:=\set{1,\ldots,n}$, then  $\abs{\lambda}=n-1$ and
$e_0 = \abs{\lambda} +1 - \ell(\lambda) = n - \ell(\lambda)$.

Let  $\Tn$  be the set of unrooted labeled  trees on $[n]$.
For any edge $(ij)$ of a  tree $T\in \Tn$,
there is a   {\em local orientation}, which
orients $(ij)$  towards its smaller vertex, i.e.,  $i \to j$ if $i<j$.
Let $\Tnr$ be the set of labeled trees on $[n]$ rooted at $r\in [n]$.
For any edge $(ij)$ of a  tree $T\in \Tnr$, there is a
{\em global orientation}, which  orients each edge  towards the root.
  It is interesting to note that for a rooted tree each edge has both a global orentation and
a local orientation.
An example of the local and global orientations is given in Figure~\ref{fig:indegree}.

\begin{figure}[t]
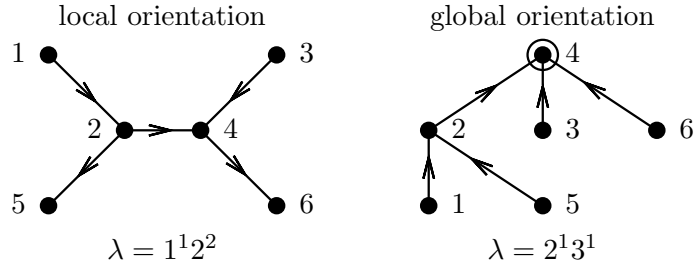
\label{fig:indegree}
\centering
\begin{pgfpicture}{13.00mm}{10.14mm}{107.00mm}{49.14mm}
\pgfsetxvec{\pgfpoint{1.00mm}{0mm}}
\pgfsetyvec{\pgfpoint{0mm}{1.00mm}}
\color[rgb]{0,0,0}\pgfsetlinewidth{0.30mm}\pgfsetdash{}{0mm}
\pgfcircle[fill]{\pgfxy(20.00,40.00)}{1.00mm}
\pgfcircle[stroke]{\pgfxy(20.00,40.00)}{1.00mm}
\pgfmoveto{\pgfxy(26.11,33.89)}\pgflineto{\pgfxy(30.00,30.00)}\pgfstroke
\pgfmoveto{\pgfxy(20.00,40.00)}\pgflineto{\pgfxy(26.11,33.89)}\pgfstroke
\pgfmoveto{\pgfxy(26.11,33.89)}\pgflineto{\pgfxy(24.63,36.36)}\pgflineto{\pgfxy(26.11,33.89)}\pgflineto{\pgfxy(23.64,35.37)}\pgflineto{\pgfxy(26.11,33.89)}\pgfclosepath\pgffill
\pgfmoveto{\pgfxy(26.11,33.89)}\pgflineto{\pgfxy(24.63,36.36)}\pgflineto{\pgfxy(26.11,33.89)}\pgflineto{\pgfxy(23.64,35.37)}\pgflineto{\pgfxy(26.11,33.89)}\pgfclosepath\pgfstroke
\pgfcircle[fill]{\pgfxy(30.00,30.00)}{1.00mm}
\pgfcircle[stroke]{\pgfxy(30.00,30.00)}{1.00mm}
\pgfcircle[fill]{\pgfxy(20.00,20.00)}{1.00mm}
\pgfcircle[stroke]{\pgfxy(20.00,20.00)}{1.00mm}
\pgfcircle[fill]{\pgfxy(40.00,30.00)}{1.00mm}
\pgfcircle[stroke]{\pgfxy(40.00,30.00)}{1.00mm}
\pgfcircle[fill]{\pgfxy(50.00,40.00)}{1.00mm}
\pgfcircle[stroke]{\pgfxy(50.00,40.00)}{1.00mm}
\pgfcircle[fill]{\pgfxy(50.00,20.00)}{1.00mm}
\pgfcircle[stroke]{\pgfxy(50.00,20.00)}{1.00mm}
\pgfmoveto{\pgfxy(24.02,24.02)}\pgflineto{\pgfxy(20.00,20.00)}\pgfstroke
\pgfmoveto{\pgfxy(30.00,30.00)}\pgflineto{\pgfxy(24.02,24.02)}\pgfstroke
\pgfmoveto{\pgfxy(24.02,24.02)}\pgflineto{\pgfxy(26.50,25.51)}\pgflineto{\pgfxy(24.02,24.02)}\pgflineto{\pgfxy(25.51,26.50)}\pgflineto{\pgfxy(24.02,24.02)}\pgfclosepath\pgffill
\pgfmoveto{\pgfxy(24.02,24.02)}\pgflineto{\pgfxy(26.50,25.51)}\pgflineto{\pgfxy(24.02,24.02)}\pgflineto{\pgfxy(25.51,26.50)}\pgflineto{\pgfxy(24.02,24.02)}\pgfclosepath\pgfstroke
\pgfmoveto{\pgfxy(36.04,30.00)}\pgflineto{\pgfxy(40.00,30.00)}\pgfstroke
\pgfmoveto{\pgfxy(30.00,30.00)}\pgflineto{\pgfxy(36.00,30.00)}\pgfstroke
\pgfmoveto{\pgfxy(36.00,30.00)}\pgflineto{\pgfxy(33.20,30.70)}\pgflineto{\pgfxy(36.00,30.00)}\pgflineto{\pgfxy(33.20,29.30)}\pgflineto{\pgfxy(36.00,30.00)}\pgfclosepath\pgffill
\pgfmoveto{\pgfxy(36.00,30.00)}\pgflineto{\pgfxy(33.20,30.70)}\pgflineto{\pgfxy(36.00,30.00)}\pgflineto{\pgfxy(33.20,29.30)}\pgflineto{\pgfxy(36.00,30.00)}\pgfclosepath\pgfstroke
\pgfmoveto{\pgfxy(43.96,33.96)}\pgflineto{\pgfxy(40.00,30.00)}\pgfstroke
\pgfmoveto{\pgfxy(50.00,40.00)}\pgflineto{\pgfxy(44.00,34.00)}\pgfstroke
\pgfmoveto{\pgfxy(44.00,34.00)}\pgflineto{\pgfxy(46.47,35.48)}\pgflineto{\pgfxy(44.00,34.00)}\pgflineto{\pgfxy(45.48,36.47)}\pgflineto{\pgfxy(44.00,34.00)}\pgfclosepath\pgffill
\pgfmoveto{\pgfxy(44.00,34.00)}\pgflineto{\pgfxy(46.47,35.48)}\pgflineto{\pgfxy(44.00,34.00)}\pgflineto{\pgfxy(45.48,36.47)}\pgflineto{\pgfxy(44.00,34.00)}\pgfclosepath\pgfstroke
\pgfmoveto{\pgfxy(45.84,24.16)}\pgflineto{\pgfxy(50.00,20.00)}\pgfstroke
\pgfmoveto{\pgfxy(40.00,30.00)}\pgflineto{\pgfxy(45.84,24.16)}\pgfstroke
\pgfmoveto{\pgfxy(45.84,24.16)}\pgflineto{\pgfxy(44.36,26.63)}\pgflineto{\pgfxy(45.84,24.16)}\pgflineto{\pgfxy(43.37,25.64)}\pgflineto{\pgfxy(45.84,24.16)}\pgfclosepath\pgffill
\pgfmoveto{\pgfxy(45.84,24.16)}\pgflineto{\pgfxy(44.36,26.63)}\pgflineto{\pgfxy(45.84,24.16)}\pgflineto{\pgfxy(43.37,25.64)}\pgflineto{\pgfxy(45.84,24.16)}\pgfclosepath\pgfstroke
\pgfcircle[fill]{\pgfxy(85.00,40.00)}{1.00mm}
\pgfcircle[stroke]{\pgfxy(85.00,40.00)}{1.00mm}
\pgfcircle[fill]{\pgfxy(70.00,30.00)}{1.00mm}
\pgfcircle[stroke]{\pgfxy(70.00,30.00)}{1.00mm}
\pgfcircle[fill]{\pgfxy(85.00,30.00)}{1.00mm}
\pgfcircle[stroke]{\pgfxy(85.00,30.00)}{1.00mm}
\pgfcircle[fill]{\pgfxy(100.00,30.00)}{1.00mm}
\pgfcircle[stroke]{\pgfxy(100.00,30.00)}{1.00mm}
\pgfcircle[fill]{\pgfxy(70.00,20.00)}{1.00mm}
\pgfcircle[stroke]{\pgfxy(70.00,20.00)}{1.00mm}
\pgfcircle[fill]{\pgfxy(85.00,20.00)}{1.00mm}
\pgfcircle[stroke]{\pgfxy(85.00,20.00)}{1.00mm}
\pgfputat{\pgfxy(17.00,40.00)}{\pgfbox[bottom,left]{\fontsize{11.38}{13.66}\selectfont \makebox(0,0)[r]{1\strut}}}
\pgfputat{\pgfxy(53.00,40.00)}{\pgfbox[bottom,left]{\fontsize{11.38}{13.66}\selectfont \makebox(0,0)[l]{3\strut}}}
\pgfputat{\pgfxy(27.00,30.00)}{\pgfbox[bottom,left]{\fontsize{11.38}{13.66}\selectfont \makebox(0,0)[r]{2\strut}}}
\pgfputat{\pgfxy(43.00,30.00)}{\pgfbox[bottom,left]{\fontsize{11.38}{13.66}\selectfont \makebox(0,0)[l]{4\strut}}}
\pgfputat{\pgfxy(17.00,20.00)}{\pgfbox[bottom,left]{\fontsize{11.38}{13.66}\selectfont \makebox(0,0)[r]{5\strut}}}
\pgfputat{\pgfxy(53.00,20.00)}{\pgfbox[bottom,left]{\fontsize{11.38}{13.66}\selectfont \makebox(0,0)[l]{6\strut}}}
\pgfputat{\pgfxy(88.00,40.00)}{\pgfbox[bottom,left]{\fontsize{11.38}{13.66}\selectfont \makebox(0,0)[l]{4\strut}}}
\pgfputat{\pgfxy(73.00,30.00)}{\pgfbox[bottom,left]{\fontsize{11.38}{13.66}\selectfont \makebox(0,0)[l]{2\strut}}}
\pgfputat{\pgfxy(88.00,30.00)}{\pgfbox[bottom,left]{\fontsize{11.38}{13.66}\selectfont \makebox(0,0)[l]{3\strut}}}
\pgfputat{\pgfxy(103.00,30.00)}{\pgfbox[bottom,left]{\fontsize{11.38}{13.66}\selectfont \makebox(0,0)[l]{6\strut}}}
\pgfputat{\pgfxy(73.00,20.00)}{\pgfbox[bottom,left]{\fontsize{11.38}{13.66}\selectfont \makebox(0,0)[l]{1\strut}}}
\pgfputat{\pgfxy(88.00,20.00)}{\pgfbox[bottom,left]{\fontsize{11.38}{13.66}\selectfont \makebox(0,0)[l]{5\strut}}}
\pgfmoveto{\pgfxy(70.00,26.00)}\pgflineto{\pgfxy(70.00,30.00)}\pgfstroke
\pgfmoveto{\pgfxy(70.00,20.00)}\pgflineto{\pgfxy(70.00,26.39)}\pgfstroke
\pgfmoveto{\pgfxy(70.00,26.39)}\pgflineto{\pgfxy(69.30,23.59)}\pgflineto{\pgfxy(70.00,26.39)}\pgflineto{\pgfxy(70.70,23.59)}\pgflineto{\pgfxy(70.00,26.39)}\pgfclosepath\pgffill
\pgfmoveto{\pgfxy(70.00,26.39)}\pgflineto{\pgfxy(69.30,23.59)}\pgflineto{\pgfxy(70.00,26.39)}\pgflineto{\pgfxy(70.70,23.59)}\pgflineto{\pgfxy(70.00,26.39)}\pgfclosepath\pgfstroke
\pgfmoveto{\pgfxy(85.00,36.00)}\pgflineto{\pgfxy(85.00,40.00)}\pgfstroke
\pgfmoveto{\pgfxy(85.00,30.00)}\pgflineto{\pgfxy(85.00,35.80)}\pgfstroke
\pgfmoveto{\pgfxy(85.00,35.80)}\pgflineto{\pgfxy(84.30,33.00)}\pgflineto{\pgfxy(85.00,35.80)}\pgflineto{\pgfxy(85.70,33.00)}\pgflineto{\pgfxy(85.00,35.80)}\pgfclosepath\pgffill
\pgfmoveto{\pgfxy(85.00,35.80)}\pgflineto{\pgfxy(84.30,33.00)}\pgflineto{\pgfxy(85.00,35.80)}\pgflineto{\pgfxy(85.70,33.00)}\pgflineto{\pgfxy(85.00,35.80)}\pgfclosepath\pgfstroke
\pgfmoveto{\pgfxy(78.84,35.89)}\pgflineto{\pgfxy(85.00,40.00)}\pgfstroke
\pgfmoveto{\pgfxy(70.00,30.00)}\pgflineto{\pgfxy(78.84,35.89)}\pgfstroke
\pgfmoveto{\pgfxy(78.84,35.89)}\pgflineto{\pgfxy(76.12,34.92)}\pgflineto{\pgfxy(78.84,35.89)}\pgflineto{\pgfxy(76.90,33.76)}\pgflineto{\pgfxy(78.84,35.89)}\pgfclosepath\pgffill
\pgfmoveto{\pgfxy(78.84,35.89)}\pgflineto{\pgfxy(76.12,34.92)}\pgflineto{\pgfxy(78.84,35.89)}\pgflineto{\pgfxy(76.90,33.76)}\pgflineto{\pgfxy(78.84,35.89)}\pgfclosepath\pgfstroke
\pgfmoveto{\pgfxy(90.54,36.31)}\pgflineto{\pgfxy(85.00,40.00)}\pgfstroke
\pgfmoveto{\pgfxy(100.00,30.00)}\pgflineto{\pgfxy(90.54,36.31)}\pgfstroke
\pgfmoveto{\pgfxy(90.54,36.31)}\pgflineto{\pgfxy(92.48,34.17)}\pgflineto{\pgfxy(90.54,36.31)}\pgflineto{\pgfxy(93.25,35.34)}\pgflineto{\pgfxy(90.54,36.31)}\pgfclosepath\pgffill
\pgfmoveto{\pgfxy(90.54,36.31)}\pgflineto{\pgfxy(92.48,34.17)}\pgflineto{\pgfxy(90.54,36.31)}\pgflineto{\pgfxy(93.25,35.34)}\pgflineto{\pgfxy(90.54,36.31)}\pgfclosepath\pgfstroke
\pgfputat{\pgfxy(35.00,13.00)}{\pgfbox[bottom,left]{\fontsize{11.38}{13.66}\selectfont \makebox[0pt]{$\lambda=1^12^2$}}}
\pgfputat{\pgfxy(85.00,13.00)}{\pgfbox[bottom,left]{\fontsize{11.38}{13.66}\selectfont \makebox[0pt]{$\lambda=2^13^1$}}}
\pgfmoveto{\pgfxy(75.69,26.20)}\pgflineto{\pgfxy(70.00,30.00)}\pgfstroke
\pgfmoveto{\pgfxy(85.00,20.00)}\pgflineto{\pgfxy(75.69,26.20)}\pgfstroke
\pgfmoveto{\pgfxy(75.69,26.20)}\pgflineto{\pgfxy(77.64,24.07)}\pgflineto{\pgfxy(75.69,26.20)}\pgflineto{\pgfxy(78.41,25.23)}\pgflineto{\pgfxy(75.69,26.20)}\pgfclosepath\pgffill
\pgfmoveto{\pgfxy(75.69,26.20)}\pgflineto{\pgfxy(77.64,24.07)}\pgflineto{\pgfxy(75.69,26.20)}\pgflineto{\pgfxy(78.41,25.23)}\pgflineto{\pgfxy(75.69,26.20)}\pgfclosepath\pgfstroke
\pgfputat{\pgfxy(35.00,44.00)}{\pgfbox[bottom,left]{\fontsize{11.38}{13.66}\selectfont \makebox[0pt]{local orientation}}}
\pgfputat{\pgfxy(85.00,44.00)}{\pgfbox[bottom,left]{\fontsize{11.38}{13.66}\selectfont \makebox[0pt]{global orientation}}}
\pgfcircle[stroke]{\pgfxy(85.00,40.00)}{2.00mm}
\end{pgfpicture}%
\caption{local and global indegree sequences}
\label{label}
\end{figure}

For any partition $\lambda$ of $n-1$ and $r\in [n]$, let $\Tnl$ (resp. $\Tnlr$) be the subset of trees in $\Tn$ (resp. $\Tnr$) with local (resp. global)
indegree sequence $\lambda$.

The problem of counting the trees with a given
indegree sequence was first encountered by Cotterill in his study of algebraic geometry. In particular, Cotterill~\cite[Eq. (3.34)]{2007arXiv0706.2049C} made the following conjecture.
\begin{conj}\label{thm:first}
Let $\lambda = 1^{e_1} 2^{e_2} \ldots$ be a partition of $n-1$ and $e_0 = n-\ell(\lambda)$. Then
 the cardinality of $\Tnl$ equals
\begin{equation}\label{first}
 \dfrac{(n-1)!^2}{e_0!(0!)^{e_0} e_1! (1!)^{e_1} e_2! (2!)^{e_2} \ldots}.
\end{equation}
\end{conj}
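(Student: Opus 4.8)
The plan is to prove \eqref{first} in the two stages announced in the abstract: first trade unrooted trees under the local orientation for rooted trees under the global orientation, and then enumerate the latter with a Pr\"ufer-type code.

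\emph{Stage 1 (reduction to rooted trees).} I would construct an indegree-preserving bijection $\Phi\colon\Tn\to\Tnr$ for one fixed root $r$, so that the local indegree sequence of $T$ equals the global indegree sequence of $\Phi(T)$; this yields $\abs{\Tnl}=\abs{\Tnlr}$. The useful reformulation is that in a rooted tree each edge points toward the root, so the global indegree of a vertex is exactly its number of children. Hence $\Tnlr$ is the set of trees rooted at $r$ in which exactly $e_i$ vertices have $i$ children for every $i\ge1$, the remaining $e_0=n-\ell(\lambda)$ vertices being leaves.

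\emph{Stage 2 (enumeration of rooted trees).} Here I would invoke the Pr\"ufer-like code for rooted labeled trees to show that the number of rooted trees on $[n]$ with a prescribed children-count sequence $(c_1,\dots,c_n)$, where $\sum_v c_v=n-1$, is the multinomial coefficient $(n-1)!/(c_1!\cdots c_n!)$. There are $n!/(e_0!\,e_1!\cdots)$ sequences $(c_v)$ realizing the distribution $\lambda$, and each contributes $(n-1)!/\prod_{i\ge0}(i!)^{e_i}$, so the number of rooted trees on $[n]$ \emph{with arbitrary root} and global indegree sequence $\lambda$ equals $\ds\frac{n!\,(n-1)!}{\prod_{i\ge0}e_i!\,(i!)^{e_i}}$. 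Because the global indegree sequence depends only on the shape of the rooted tree, relabeling the vertices shows that $\abs{\Tnlr}$ does not depend on $r$; dividing the above total by $n$ gives $\abs{\Tnlr}=(n-1)!^2/\prod_{i\ge0}e_i!\,(i!)^{e_i}$, which by Stage 1 equals $\abs{\Tnl}$ and is exactly \eqref{first}.

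The hard part will be Stage 1. The local orientation of an unrooted tree is an arbitrary acyclic orientation that may possess several sinks---every local minimum (a vertex smaller than all of its neighbors) is one---whereas the global orientation of a rooted tree has the single sink $r$. Thus $\Phi$ must reshuffle the edge orientations while preserving the entire multiset of indegrees, and essentially the whole content of the statement is concentrated in the existence and indegree-compatibility of such a map. I would try either a recursive construction that peels vertices off $T$ in label order, transferring local indegrees to children counts one vertex at a time, or a direct translation between the Pr\"ufer words of the two sides; in either case the essential verification is that the indegree multiset is preserved, after which Stage 2 is a routine multinomial computation.
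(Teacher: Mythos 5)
Your two-stage plan reproduces the paper's architecture exactly: an indegree-preserving bijection $\Tnl\to\Tnlr$ (the paper's Theorem~\ref{thm:main}) followed by an enumeration of $\Tnlr$ (the paper's Theorem~\ref{thm:prufer}). Your Stage 2 is correct as it stands: the number of rooted trees on $[n]$ with arbitrary root and children counts $(c_1,\dots,c_n)$ is indeed $(n-1)!/(c_1!\cdots c_n!)$ (e.g.\ from the classical rooted Pr\"ufer code, or from the identity $\sum_T\prod_v x_v^{c_v(T)}=(x_1+\cdots+x_n)^{n-1}$), your relabeling argument for the independence of $\abs{\Tnlr}$ from $r$ is valid, and dividing by $n$ gives \eqref{first}. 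This is precisely the ``known formula'' route the paper itself mentions via \cite[Corollary 5.3.5]{MR1676282}; the paper instead proves the sharper Theorem~\ref{thm:prufer} by a new leaf-group Pr\"ufer code yielding a bijection onto $\Pi_{n,\lambda}\times\mathcal{S}_{n,\ell(\lambda)}^{(r)}$ (which it needs to make the whole proof bijective and to answer Du--Yin's question), but for the bare cardinality your shortcut is legitimate.

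The genuine gap is Stage 1, which you yourself identify as carrying ``essentially the whole content of the statement'' and then do not carry out: no map $\Phi$ is constructed, and neither of your two suggestions survives inspection as sketched. Peeling vertices off in label order does not work naively, because deleting a vertex alters the local indegrees of its neighbors, and the local orientation has many sinks (every vertex smaller than all its neighbors, in particular the vertex $1$) while the global orientation has the single sink $r$, so there is no canonical vertex that is simultaneously ``removable'' for both statistics; and there is no off-the-shelf Pr\"ufer word for unrooted trees whose letter multiplicities record \emph{local} indegrees --- producing such a code is essentially equivalent to the theorem, not a tool for proving it. What the paper actually does here occupies all of Section~\ref{mainsection}: cut the good edges (those locally oriented toward $r$), decompose the remainder into increasing trees, straighten each increasing tree into a linear tree via the postorder cyclic permutation $\sigma$, and reattach the good edges with their upper endpoints shifted by $\sigma$; invertibility requires an intrinsic characterization of the image edges (the ``proper'' edges, Lemma~\ref{dash}), and indegree preservation is a separate argument (the sibship Lemma~\ref{sibship}). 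So your proposal establishes the routine half and correctly locates, but leaves open, the half where the real difficulty lies.
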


This remarkable formula is reminiscent  to at least two known enumerative problems.
The {\em type} of a set-partition $\pi$ is the integer partition $1^{e_1} 2^{e_2} \ldots$ if $e_h$ blocks of $\pi$ have size $i$, we denote it by $\type(\pi)$.
Let $\Pi_{n,\lambda}$ be the set of partitions of a $(n-1)$-element
 set of {\em type} $\lambda=1^{e_1} 2^{e_2} \ldots$.   Since the cardinality  of $\Pi_{n,\lambda}$ is easily seen  to equal
$ (n-1)!/{e_1! (1!)^{e_1} e_2! (2!)^{e_2} \ldots}$,
Stanley (see \cite{DY10}) noticed  that
the  formula \eqref{first} can be written as
$\abs{\Pi_{n,\lambda}}\cdot \frac{(n-1)!}{(n-\ell(\lambda))!}$.
Based on this factorization   a proof of Conjecture ~\ref{thm:first} was given by Du and Yin \cite{DY10}
by using M\"obius inversion formula on the poset of set partitions.
Obviously a bijective proof of this result  is highly desired.
More precisely,
for $k\in [n]$,
a {\em $k$-permutation} of $[n]$ is an ordered sequence of $k$ elements selected from $[n]$, without repetitions.
Denote by $\Snkr$ the set of $k$-permutations $(p_{1}, \ldots, p_k)$ of $[n]$ with $p_k=r$.
The cardinality of $\Snkr$ is equal to
$(n-1)\ldots(n-k+1) = (n-1)!/{(n-k)!}$.
It follows that a bijection between $\Tnl$ and
$\Pi_{n,\lambda}\times {\mathcal S}_{n,\ell(\lambda)}^{(r)}$ will give
 a bijective proof of Conjecture~1. We shall construct such a bijection via labeled rooted trees. Indeed, for a given partition $\lambda=1^{e_1} 2^{e_2} \ldots$ of $n-1$,
the cardinality of $\Tnlr$ is independent of the choice $r\in [n]$.
From the known formula for the total number of rooted trees on $[n]$ with global indegree sequence of type $\lambda$ (see, for example, \cite[Corollary 5.3.5]{MR1676282}) we derive
that   the cardinality  of $\Tnlr$   is given by
 \eqref{first}.  For our purpose, we will first exhibit a   \emph{Pr\"ufer-like code} for rooted trees to prove this result.
\begin{thm}\label{thm:prufer}
Let $\lambda = 1^{e_1} 2^{e_2} \ldots$ be a partition of $n-1$  and $r\in [n]$. There is a bijection between $\Tnlr$ and $\Pi_{n,\lambda}\times {\mathcal S}_{n,\ell(\lambda)}^{(r)}$.
\end{thm}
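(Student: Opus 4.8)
The plan is to factor the desired bijection through a Pr\"ufer-like code for rooted trees and then split each code word into a set partition and a permutation. Given $T\in\Tnlr$, I would repeatedly delete the smallest leaf --- where a leaf means a non-root vertex with no children, so that under the global orientation every edge still points toward $r$ --- and record the parent of the deleted vertex, halting when only $r$ survives. This produces a word $w=w_1\cdots w_{n-1}\in[n]^{n-1}$. Two properties need checking: each vertex $v$ is recorded exactly once for every child it has, hence appears exactly $\indeg(v)$ times in $w$ (the global indegree being the number of children); and the final deletion occurs when only $r$ and one of its children remain, so $w_{n-1}=r$. Consequently the multiset of letter-multiplicities of $w$ is $\lambda$, and the distinct letters are precisely the $\ell(\lambda)$ internal vertices, among them $r$. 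The standard Pr\"ufer inversion (rebuilding the tree by reinserting leaves in the forced order) shows this code is a bijection from $\Tnr$ onto $\set{w\in[n]^{n-1}:w_{n-1}=r}$; restricting to global indegree sequence $\lambda$ gives a bijection between $\Tnlr$ and the set $W$ of words $w\in[n]^{n-1}$ with $w_{n-1}=r$ whose letter-multiplicities form $\lambda$.

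Next I would split each $w\in W$ into a pair $(\pi,p)\in\Pi_{n,\lambda}\times\mathcal{S}_{n,\ell(\lambda)}^{(r)}$. For $\pi$, partition the position set $\set{1,\dots,n-1}$, an $(n-1)$-element set, by placing two positions in the same block exactly when they carry the same letter; the block sizes are the letter-multiplicities, so $\type(\pi)=\lambda$ and $\pi\in\Pi_{n,\lambda}$. For $p$, list the blocks of $\pi$ in increasing order of their largest element, $B_1,\dots,B_{\ell(\lambda)}$, and let $p_j$ be the common letter of $B_j$. Because $w_{n-1}=r$, the position $n-1$, which is the global maximum, lies in the block carrying the letter $r$, and that block is therefore $B_{\ell(\lambda)}$; hence $p_{\ell(\lambda)}=r$, the $p_j$ are distinct, and $p=(p_1,\dots,p_{\ell(\lambda)})\in\mathcal{S}_{n,\ell(\lambda)}^{(r)}$.

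The inverse is immediate and confirms bijectivity: from $(\pi,p)$, order the blocks of $\pi$ by largest element, assign the letter $p_j$ to every position in $B_j$, and read off $w$; then $w_{n-1}=p_{\ell(\lambda)}=r$, so $w\in W$, and the two constructions are mutually inverse. Composing the code with this splitting yields the required bijection between $\Tnlr$ and $\Pi_{n,\lambda}\times\mathcal{S}_{n,\ell(\lambda)}^{(r)}$. The decomposition and its inverse are pure bookkeeping once the canonical block order is fixed; the point deserving the most care --- and hence the main obstacle --- is showing that the leaf-removal code is genuinely well defined and invertible with the advertised multiplicity--indegree dictionary, that is, verifying Pr\"ufer inversion in the rooted setting together with the automatic placement of the $r$-block last (which rests on the position $n-1$ always belonging to that block). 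The degenerate case $n=1$, where $\lambda$ is empty, is handled separately and trivially.
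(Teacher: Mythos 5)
Your proposal is correct, but it reaches the theorem by a genuinely different route than the paper. You compose the classical one-vertex-at-a-time Pr\"ufer code for rooted trees (delete the smallest leaf, record its parent; the paper itself invokes this classical code from \cite[P.25]{MR1676282}) with a purely bookkeeping bijection that splits the resulting word $w\in[n]^{n-1}$ with $w_{n-1}=r$ into a pair: the partition of the \emph{position set} $\set{1,\dots,n-1}$ into fibers of equal letters, and the sequence of distinct letters read off from the blocks ordered by their maxima, the condition $w_{n-1}=r$ correctly forcing $p_{\ell(\lambda)}=r$ because position $n-1$ lies in the block of maximal maximum. All the steps check out: each vertex occurs in $w$ exactly its number of children, so the code restricts to a bijection from $\Tnlr$ onto words of multiplicity type $\lambda$, and your splitting and its inverse are mutually inverse; moreover, since $\Pi_{n,\lambda}$ is defined in the paper as partitions of \emph{a} $(n-1)$-element set, taking positions rather than vertex labels as the ground set is legitimate. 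The paper instead modifies the Pr\"ufer algorithm itself: in Theorem~\ref{prufer} it deletes entire \emph{leaf-groups} (child-groups all of whose members are leaves), largest first, so that the partition component of the code is $\phiglo(T)$, the set of child-groups of $T$ --- a partition of $[n]\setminus\set{r}$ --- and it proves invertibility directly, with a counting argument guaranteeing at each step the existence of the next block avoiding the future parents. The difference matters downstream: the paper's $\Pi_{n,\lambda}$-component is the sibship partition of the tree, which is exactly what combines with Lemma~\ref{sibship} (namely $\philoc(T)=\phiglo(\Phi_r(T))$) to refine the enumeration to fibers over a fixed set-partition $\pi$, yielding the bijective proof of Du--Yin's Theorem 1.1 and the answer to their open problem, with $\lambda=1^{n-1}$ recovering the classical code. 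Your position-partition carries no such structural meaning inside the tree, so your argument proves Theorem~\ref{thm:prufer} as stated (and hence the cardinality in Conjecture~\ref{thm:first} once combined with Theorem~\ref{thm:main}), but would not support those finer fiber-wise corollaries; in exchange it is more elementary, resting entirely on the textbook code plus transparent bookkeeping.
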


Therefore, Cotterill's conjecture will be proved if we can establish
a bijection from (unrooted) trees to rooted trees
such that the local indegree sequence of a (unrooted) tree equals the global indegree sequence of the corresponding rooted tree.
The following is our second main theorem.
\begin{thm}\label{thm:main}For any $r\in [n]$,
there is  a bijection $\Phi_r: \Tnl\to \Tnlr$.
\end{thm}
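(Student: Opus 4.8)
The plan is to realize $\Phi_r$ as a composition of two simpler bijections that record the same combinatorial data, namely a set partition of type $\lambda$ together with an ordering of its blocks. Concretely, I would reinterpret both indegrees as an assignment of each edge to one of its endpoints: in the local orientation an edge is charged to its larger endpoint, while in the global orientation it is charged to its parent endpoint, the one nearer to $r$. Under either rule the in-load of a vertex is its indegree, so the multiset of in-loads is exactly $\lambda$. The classes of in-edges thus form, in each case, a partition of type $\lambda$ whose blocks are labelled by their apex vertices; I will call these the local and global sibships, $\sibloc$ and $\sibglo$. The goal is to show that the passage from $\sibloc$ to $\sibglo$, together with a compatible reordering of apexes, is reversible.

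For the global side I would use the Pr\"ufer-like code of Theorem~\ref{thm:prufer}: it already gives a bijection $\phiglo\colon\Tnlr\to \Pi_{n,\lambda}\times\mathcal{S}_{n,\ell(\lambda)}^{(r)}$ in which the blocks of the set partition are the children-sets (the global sibship) and the $\ell(\lambda)$-permutation ending at $r$ records the apex vertices in the order in which they are read off. I would then build the analogous local code $\philoc\colon\Tnl\to\Pi_{n,\lambda}\times\mathcal{S}_{n,\ell(\lambda)}^{(r)}$: root the unrooted tree at $r$, identify each non-root vertex with the edge joining it to its parent, transport the local sibship to a partition of this $(n-1)$-element set, and extract the ordered list of apexes by the same Pr\"ufer-style reading (peeling vertices in a canonical order). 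Granting that $\philoc$ is a well-defined type-preserving bijection onto the same target set, I would set $\Phi_r=\phiglo^{-1}\circ\philoc$; preservation of $\lambda$ is then automatic, since both codes have block-size multiset $\lambda$, i.e. $\typeloc=\typeglo$.

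The soundness of $\philoc$ I would prove by induction on $n$, peeling a canonically chosen vertex (for instance the largest leaf) from the unrooted tree and checking that its removal alters the local sibship and the associated code in exactly the way the inductive hypothesis prescribes, then reattaching it on the decoded side. The main obstacle is genuinely the local-versus-global mismatch: the sets of smaller neighbours overlap, since a vertex is a smaller neighbour of each of its larger neighbours, so unlike the global children-sets the local in-neighbour sets do not partition the vertex set, and the sibship must be set up on the edge set and then transported. Consequently the delicate point is to show that the Pr\"ufer-style decoding of an arbitrary element of $\Pi_{n,\lambda}\times\mathcal{S}_{n,\ell(\lambda)}^{(r)}$ returns a genuine tree, connected and acyclic with $r$ as the designated vertex and with the prescribed local charging, rather than a forest or a structure carrying an inconsistent orientation. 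I expect this reconstruction step, together with the verification that the two reading orders underlying $\philoc$ and $\phiglo$ match up, to carry essentially the entire weight of the argument.
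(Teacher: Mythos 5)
Your overall architecture is legitimate in principle: since Theorem~\ref{thm:prufer} gives a bijection $(\phiglo,\psi)\colon\Tnlr\to\Pi_{n,\lambda}\times\mathcal{S}_{n,\ell(\lambda)}^{(r)}$, any type-preserving bijection $\philoc\colon\Tnl\to\Pi_{n,\lambda}\times\mathcal{S}_{n,\ell(\lambda)}^{(r)}$ would indeed produce $\Phi_r$ by composition. (This reverses the paper's logic, which constructs $\Phi_r$ directly as a tree-to-tree map and only afterwards obtains the local code as $(\phiglo,\psi)\circ\Phi_r$.) But your argument defers the entire content of the theorem into the clause ``granting that $\philoc$ is a well-defined type-preserving bijection onto the same target set'': producing such a local Pr\"ufer-like code is exactly as hard as Theorem~\ref{thm:main} itself, and your sketch of how to build it fails at a concrete point.

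The failure is in the permutation component. You propose to extract the ordered list of \emph{apex} vertices of the local sibship blocks by a canonical peeling. An element of $\mathcal{S}_{n,\ell(\lambda)}^{(r)}$ must end in $p_k=r$; in the global setting this is automatic because the root is always a parent and is read last. In the local setting, however, $r$ need not head any local sibship block at all: with your own convention (an edge charged to its larger endpoint, i.e.\ $i\to j$ for $i<j$), vertex $1$ has no smaller neighbour and hence local indegree $0$ in \emph{every} tree, so for $r=1$ no apex-reading, in any peeling order, can produce a sequence ending at $r$. Thus the map you describe is not onto the stated target, and the defect cannot be repaired by choosing a cleverer peeling: the blocks of $\philoc(T)$ must first be \emph{re-indexed} by different vertices before a parent-style reading can terminate at $r$. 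Supplying this re-indexing is precisely what the paper's construction does --- it cuts the good edges, converts each resulting increasing component into a linear tree via postorder, and the induced cyclic permutation $\sigma$ transports the local sibship at $v$ to the global sibship at $\sigma(v)$ in the rooted image (Lemma~\ref{sibship}), with reversibility guaranteed by identifying the cut edges inside $\Phi_r(T)$ as exactly the proper (non-eldest or minimal) edges (Lemmas~\ref{dash} and~\ref{main}). Your proposed induction also peels single largest leaves rather than whole blocks, so it does not even interface with the block-plus-apex data you intend to track. Without an analogue of $\sigma$ and a proof of reconstruction, the proposal restates the problem rather than solving it.
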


Besides, Cotterill~\cite[Eq. (3.39)]{2007arXiv0706.2049C} also conjectured the following formula:
\begin{equation}\label{second}
\sum_{|\lambda|=n-1 \atop e_0+e_1+\cdots =n} \dfrac{(n-1)!}{e_0!e_1!e_2! \ldots } \sum_{i\ge 0} e_h {i + 1 \choose 2} = {2n-1 \choose n-2}.
\end{equation}
In a previous  version of this paper, we  proved
\begin{equation}\label{general}
\sum_{|\lambda|=m-1 \atop e_0+e_1+\cdots =n} {n \choose e_o, e_1, e_2, \ldots } \sum_{i\ge 0} e_h { i+p-l \choose p} = n {n+m-2+p-l \choose n-1+p}.
\end{equation}
and pointed out   that  \eqref{second}  is the $m=n$, $p=2$, and $l=1$ case of
\eqref{general}. After submitting the paper, Ole Warnaar (Personal communication) kindly conveyed us with his believe that
a $q$-analogue of \eqref{general} must exist and sent us an identity  on the Hall-Littlewood functions in the spirit of \cite{MR2200851}.
Our third aim is to  present  the  $q$-analogue of \eqref{general} derived  from  Warnaar's original identity.
For any partition $\lambda$, let $\lambda'=(\lambda'_1,\lambda'_2,\ldots)$ be its conjugate and $n(\lambda)=\sum_i {\lambda'_i \choose 2}$.
Note that $\ell(\lambda)=\lambda'_1$.
Introduce the $q$-shifted factorial:
$$
(a)_k:=(a;q)_k=(1-a)(1-aq)\cdots (1-aq^{k-1})\quad \textrm{ for $k\geq 0$}.
$$
The $q$-binomial and $q$-multinomial coefficients are defined
by
$$
{n\brack k}_q=\frac{(q;q)_n}{(q;q)_k(q;q)_{n-k}}\quad\textrm{and}\quad
{n\brack e_0,e_1,\ldots,e_l}_q=\frac{(q;q)_n}{(q;q)_{e_0} (q;q)_{e_1}\cdots (q;q)_{e_l}},
$$
where $e_0+\cdots +e_l=n$.

\begin{thm}\label{thm:qbinomial}
For nonnegative positive integers $m$, $n$, $l$ and $p$ such that $m,n\geq 1$, there holds
\begin{align}
\sum_{|\lambda|=m-1, \ell(\lambda)\leq n} &q^{(p+1)(m-1)+2 n(\lambda)}{n\brack e_0,e_1,\ldots}_q\nonumber\\
&\times
\sum_{i\geq 0}q^{(1-p)i-2\sum_{k=1}^i \lambda_k'}
\qbin{i+p-l}{p}[e_h]_q=[n]_q{n+m-2+p-l\brack n-1+p}_q,
\label{eq:gen}
\end{align}
where $e_h= \lambda'_i - \lambda'_{i+1}$ with  $\lambda'_0 = n$.
\end{thm}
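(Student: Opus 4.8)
The plan is to reduce \eqref{eq:gen} to a single elementary $q$-Vandermonde summation by first disposing of the $q$-multinomial weight and of the inner statistic. Write $e_i=\lambda_i'-\lambda_{i+1}'$ for the multiplicities, so that $\lambda=1^{e_1}2^{e_2}\cdots$ and $e_0=n-\ell(\lambda)$. The starting point is the ``$q$-derivative'' identity for $q$-multinomials,
$$[e_i]_q\,\qbin{n}{e_0,e_1,\ldots}=[n]_q\,\qbin{n-1}{e_0,\ldots,e_i-1,\ldots},$$
valid whenever $e_i\ge1$ (and reading $0=0$ when $e_i=0$), which follows at once from $\qbin{n}{e_0,e_1,\ldots}=[n]_q!\,[e_0]_q!^{-1}[e_1]_q!^{-1}\cdots$. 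This pulls the factor $[n]_q$ out of the entire left-hand side of \eqref{eq:gen}, matching the $[n]_q$ on the right, and replaces the summand by $\qbin{n-1}{e_0,\ldots,e_i-1,\ldots}$.

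The next step is a change of summation variable. For each $i$ with $e_i\ge1$, removing one part equal to $i$ from $\lambda$ produces a partition $\mu$ with $|\mu|=m-1-i$ and $\ell(\mu)=\ell(\lambda)-1\le n-1$, and this gives, for each fixed $i$, a bijection between the $\lambda$ occurring and all partitions $\mu$ of $m-1-i$ into at most $n-1$ parts. Since $\lambda_k'=\mu_k'+1$ for $1\le k\le i$ and $\lambda_k'=\mu_k'$ otherwise, one has $n(\lambda)=n(\mu)+\sum_{k=1}^i\mu_k'$ and $\sum_{k=1}^i\lambda_k'=\sum_{k=1}^i\mu_k'+i$; moreover $e_0$ is unchanged while $e_i$ drops by one, so $\qbin{n-1}{e_0,\ldots,e_i-1,\ldots}$ is exactly the $q$-multinomial attached to the type of $\mu$. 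Substituting, the whole $q$-exponent $(p+1)(m-1)+2n(\lambda)+(1-p)i-2\sum_{k=1}^i\lambda_k'$ collapses to $(p+1)(m-1-i)+2n(\mu)$, so the left-hand side becomes
$$[n]_q\sum_{i\ge0}q^{(p+1)(m-1-i)}\qbin{i+p-l}{p}\sum_{\substack{|\mu|=m-1-i\\ \ell(\mu)\le n-1}}q^{2n(\mu)}\qbin{n-1}{\mu\text{-type}}.$$

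The crux is then the partition identity
$$\sum_{\substack{|\mu|=M\\ \ell(\mu)\le N}}q^{2n(\mu)}\qbin{N}{\mu\text{-type}}=\qbin{M+N-1}{M},$$
equivalently the generating function $\sum_{\ell(\mu)\le N}q^{2n(\mu)}\qbin{N}{\mu\text{-type}}z^{|\mu|}=1/(z;q)_N$. I would prove this via the column factorization $\qbin{N}{\mu\text{-type}}=\prod_{k\ge1}\qbin{\mu_{k-1}'}{\mu_k'}$ with $\mu_0'=N$, which rewrites each summand as the product $\prod_{k\ge1}q^{2\binom{\mu_k'}{2}}\qbin{\mu_{k-1}'}{\mu_k'}z^{\mu_k'}$ along the weakly decreasing chain $N\ge\mu_1'\ge\mu_2'\ge\cdots\ge0$; summing the chain by a transfer/induction argument reduces the claim to the recurrence $\tfrac1{(z;q)_c}=\sum_{j=0}^c q^{2\binom j2}\qbin{c}{j}z^j\tfrac1{(z;q)_j}$, checkable by the $q$-binomial theorem. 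I expect this lemma to be the main obstacle, and it is precisely the point at which Warnaar's Hall--Littlewood identity can be substituted, since $q^{2n(\mu)}\qbin{N}{\mu\text{-type}}$ is a principal specialization of a Hall--Littlewood weight. Applying the lemma with $M=m-1-i$, $N=n-1$ turns the inner sum into $\qbin{m+n-3-i}{n-2}$.

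It remains to prove the single-sum identity
$$\sum_{i\ge0}q^{(p+1)(m-1-i)}\qbin{i+p-l}{p}\qbin{m+n-3-i}{n-2}=\qbin{n+m-2+p-l}{n-1+p}.$$
Setting $j=i-l$ and $A=m-1-l$, the left-hand side is the coefficient of $x^{A}$ in $\frac{1}{(x;q)_{p+1}}\cdot\frac{1}{(q^{p+1}x;q)_{n-1}}$, using the two instances $\frac1{(x;q)_{p+1}}=\sum_{j\ge0}\qbin{j+p}{p}x^j$ and $\frac1{(q^{p+1}x;q)_{n-1}}=\sum_{k\ge0}q^{(p+1)k}\qbin{k+n-2}{n-2}x^k$ of the $q$-binomial theorem. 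Since $(x;q)_{p+1}(q^{p+1}x;q)_{n-1}=(x;q)_{n+p}$, this product is simply $1/(x;q)_{n+p}$, whose $x^{A}$-coefficient is $\qbin{A+n+p-1}{A}=\qbin{n+m-2+p-l}{n-1+p}$, which finishes the proof.
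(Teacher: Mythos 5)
Your proposal is correct and follows essentially the same route as the paper: pull out $[n]_q$ via $\qbin{n}{e_0,e_1,\ldots}[e_i]_q=[n]_q\qbin{n-1}{e_0,\ldots,e_i-1,\ldots}$, delete a part $i$ from $\lambda$ so the exponent collapses to $(p+1)(m-1-i)+2n(\mu)$, evaluate the inner sum by the identity $\sum_{|\mu|=M,\,\ell(\mu)\le N}q^{2n(\mu)}\qbin{N}{\mu\text{-type}}=\qbin{M+N-1}{M}$, and finish by extracting the coefficient of $x^{m-1-l}$ from $\frac{1}{(x;q)_{p+1}}\frac{1}{(xq^{p+1};q)_{n-1}}=\frac{1}{(x;q)_{n+p}}$, exactly the paper's chain of reductions through its equations \eqref{eq:qbis}, \eqref{eq:lemma} and \eqref{eq:simple}. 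The only (cosmetic) divergence is that the paper obtains the key lemma by citing Macdonald's iterated $q$-Chu--Vandermonde argument, whereas you reprove it via the column factorization and the Durfee-square recurrence $\frac{1}{(z;q)_c}=\sum_{j=0}^{c}q^{2\binom{j}{2}}\qbin{c}{j}z^{j}\frac{1}{(z;q)_j}$, which is a sound equivalent derivation.
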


This paper is organized as follows: In Section~\ref{bi}, we give a Pr\"ufer-like code for rooted labeled trees to prove Theorem~\ref{bi},  and in Section~\ref{mainsection}, we prove Theorem~\ref{thm:main} by constructing a bijection from unrooted labeled trees to rooted labeled trees, which maps local indegree sequence to global indegree sequence. In Section~\ref{four}, we prove Theorem~\ref{thm:qbinomial}.
In the last section, we discuss a connection between Remmel and Williamson's generating function~\cite{MR1928786} for trees with respect to the
indegree type
 and Coterill's formula \eqref{first}.

We close this section with some further definitions.
Throughout this paper, we denote by $\typeloc(T)$ (resp. $\typeglo(T)$) the local (resp. global) indegree sequence of a tree $T$ as an integer partition.
Let $\Pinkr$ be the set of partitions of the set $[n]\setminus \set{r}$ with $k$ parts.
\section{Proof of Theorem \ref{thm:prufer}}\label{bi}
The classical {\em Pr\"ufer code} for a rooted tree is the sequence obtained
by cutting recursively the largest {\em leave} and recording its parent (see \cite[P.25]{MR1676282}).
In this section, we shall give an analogous code for rooted trees by replacing leaves by {\em leaf-groups}.

Given a rooted tree $T$, a vertex $v$ of $T$ is called a {\em leaf} if the global indegree of $v$ is 0.
If $i \to j$ is an edge of $T$, then $i$ (resp. $j$) is called the {\em child} (resp. {\em parent}) of $j$ (resp. $i$).
The set of all the children of $v$ is called its {\em child-group}, denoted by $G_v$.
In particular, a child-group is called {\em leaf-group} if all the children are leaves.
Moreover, we order the leaf-groups by their maximal elements. For example, we have
\begin{align}\label{eq:order}
\{5,9,12\}>\{2,11\}.
\end{align}

\begin{figure}[t]
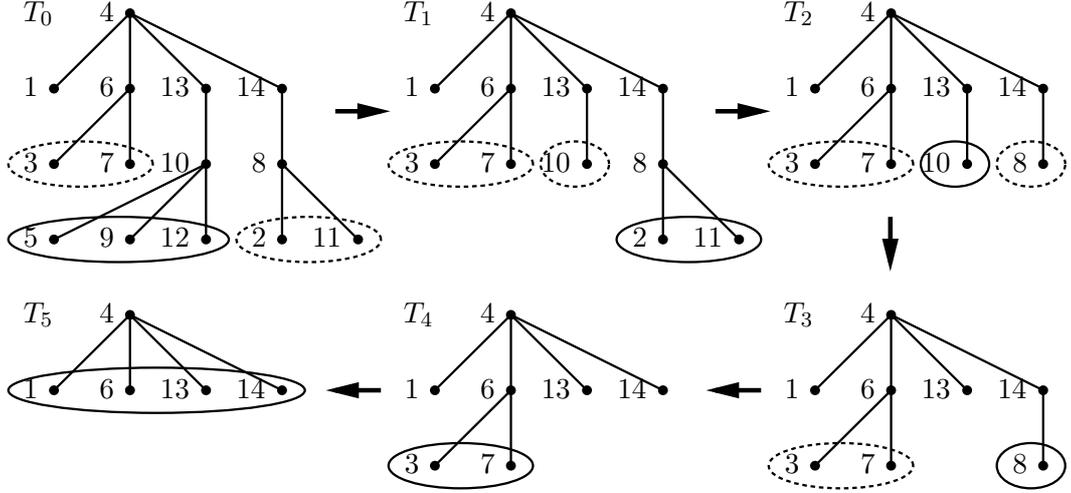

\centering
\begin{pgfpicture}{22.00mm}{-25.00mm}{165.00mm}{44.14mm}
\pgfsetxvec{\pgfpoint{1.00mm}{0mm}}
\pgfsetyvec{\pgfpoint{0mm}{1.00mm}}
\color[rgb]{0,0,0}\pgfsetlinewidth{0.30mm}\pgfsetdash{}{0mm}
\pgfputat{\pgfxy(36.00,39.00)}{\pgfbox[bottom,left]{\fontsize{11.38}{13.66}\selectfont 4}}
\pgfputat{\pgfxy(26.00,29.00)}{\pgfbox[bottom,left]{\fontsize{11.38}{13.66}\selectfont 1}}
\pgfcircle[fill]{\pgfxy(40.00,40.00)}{0.50mm}
\pgfcircle[stroke]{\pgfxy(40.00,40.00)}{0.50mm}
\pgfcircle[fill]{\pgfxy(30.00,30.00)}{0.50mm}
\pgfcircle[stroke]{\pgfxy(30.00,30.00)}{0.50mm}
\pgfcircle[fill]{\pgfxy(40.00,30.00)}{0.50mm}
\pgfcircle[stroke]{\pgfxy(40.00,30.00)}{0.50mm}
\pgfcircle[fill]{\pgfxy(50.00,30.00)}{0.50mm}
\pgfcircle[stroke]{\pgfxy(50.00,30.00)}{0.50mm}
\pgfcircle[fill]{\pgfxy(60.00,30.00)}{0.50mm}
\pgfcircle[stroke]{\pgfxy(60.00,30.00)}{0.50mm}
\pgfcircle[fill]{\pgfxy(30.00,20.00)}{0.50mm}
\pgfcircle[stroke]{\pgfxy(30.00,20.00)}{0.50mm}
\pgfcircle[fill]{\pgfxy(40.00,20.00)}{0.50mm}
\pgfcircle[stroke]{\pgfxy(40.00,20.00)}{0.50mm}
\pgfcircle[fill]{\pgfxy(50.00,20.00)}{0.50mm}
\pgfcircle[stroke]{\pgfxy(50.00,20.00)}{0.50mm}
\pgfcircle[fill]{\pgfxy(60.00,20.00)}{0.50mm}
\pgfcircle[stroke]{\pgfxy(60.00,20.00)}{0.50mm}
\pgfcircle[fill]{\pgfxy(60.00,10.00)}{0.50mm}
\pgfcircle[stroke]{\pgfxy(60.00,10.00)}{0.50mm}
\pgfcircle[fill]{\pgfxy(70.00,10.00)}{0.50mm}
\pgfcircle[stroke]{\pgfxy(70.00,10.00)}{0.50mm}
\pgfmoveto{\pgfxy(40.00,40.00)}\pgflineto{\pgfxy(30.00,30.00)}\pgfstroke
\pgfmoveto{\pgfxy(40.00,40.00)}\pgflineto{\pgfxy(40.00,30.00)}\pgfstroke
\pgfmoveto{\pgfxy(40.00,40.00)}\pgflineto{\pgfxy(50.00,30.00)}\pgfstroke
\pgfmoveto{\pgfxy(40.00,40.00)}\pgflineto{\pgfxy(60.00,30.00)}\pgfstroke
\pgfmoveto{\pgfxy(60.00,30.00)}\pgflineto{\pgfxy(60.00,20.00)}\pgfstroke
\pgfmoveto{\pgfxy(40.00,30.00)}\pgflineto{\pgfxy(30.00,20.00)}\pgfstroke
\pgfmoveto{\pgfxy(40.00,30.00)}\pgflineto{\pgfxy(40.00,20.00)}\pgfstroke
\pgfmoveto{\pgfxy(50.00,30.00)}\pgflineto{\pgfxy(50.00,20.00)}\pgfstroke
\pgfmoveto{\pgfxy(60.00,20.00)}\pgflineto{\pgfxy(60.00,10.00)}\pgfstroke
\pgfmoveto{\pgfxy(60.00,20.00)}\pgflineto{\pgfxy(70.00,10.00)}\pgfstroke
\pgfputat{\pgfxy(56.00,9.00)}{\pgfbox[bottom,left]{\fontsize{11.38}{13.66}\selectfont 2}}
\pgfputat{\pgfxy(36.00,29.00)}{\pgfbox[bottom,left]{\fontsize{11.38}{13.66}\selectfont 6}}
\pgfputat{\pgfxy(56.00,19.00)}{\pgfbox[bottom,left]{\fontsize{11.38}{13.66}\selectfont 8}}
\pgfputat{\pgfxy(26.00,19.00)}{\pgfbox[bottom,left]{\fontsize{11.38}{13.66}\selectfont 3}}
\pgfputat{\pgfxy(36.00,19.00)}{\pgfbox[bottom,left]{\fontsize{11.38}{13.66}\selectfont 7}}
\pgfputat{\pgfxy(64.00,9.00)}{\pgfbox[bottom,left]{\fontsize{11.38}{13.66}\selectfont 11}}
\pgfputat{\pgfxy(44.00,29.00)}{\pgfbox[bottom,left]{\fontsize{11.38}{13.66}\selectfont 13}}
\pgfputat{\pgfxy(44.00,19.00)}{\pgfbox[bottom,left]{\fontsize{11.38}{13.66}\selectfont 10}}
\pgfputat{\pgfxy(54.00,29.00)}{\pgfbox[bottom,left]{\fontsize{11.38}{13.66}\selectfont 14}}
\pgfsetlinewidth{0.60mm}\pgfmoveto{\pgfxy(67.00,27.00)}\pgflineto{\pgfxy(73.00,27.00)}\pgfstroke
\pgfmoveto{\pgfxy(73.00,27.00)}\pgflineto{\pgfxy(70.20,27.70)}\pgflineto{\pgfxy(70.20,26.30)}\pgflineto{\pgfxy(73.00,27.00)}\pgfclosepath\pgffill
\pgfmoveto{\pgfxy(73.00,27.00)}\pgflineto{\pgfxy(70.20,27.70)}\pgflineto{\pgfxy(70.20,26.30)}\pgflineto{\pgfxy(73.00,27.00)}\pgfclosepath\pgfstroke
\pgfmoveto{\pgfxy(117.00,27.00)}\pgflineto{\pgfxy(123.00,27.00)}\pgfstroke
\pgfmoveto{\pgfxy(123.00,27.00)}\pgflineto{\pgfxy(120.20,27.70)}\pgflineto{\pgfxy(120.20,26.30)}\pgflineto{\pgfxy(123.00,27.00)}\pgfclosepath\pgffill
\pgfmoveto{\pgfxy(123.00,27.00)}\pgflineto{\pgfxy(120.20,27.70)}\pgflineto{\pgfxy(120.20,26.30)}\pgflineto{\pgfxy(123.00,27.00)}\pgfclosepath\pgfstroke
\pgfmoveto{\pgfxy(140.00,13.00)}\pgflineto{\pgfxy(140.00,7.00)}\pgfstroke
\pgfmoveto{\pgfxy(140.00,7.00)}\pgflineto{\pgfxy(140.70,9.80)}\pgflineto{\pgfxy(139.30,9.80)}\pgflineto{\pgfxy(140.00,7.00)}\pgfclosepath\pgffill
\pgfmoveto{\pgfxy(140.00,7.00)}\pgflineto{\pgfxy(140.70,9.80)}\pgflineto{\pgfxy(139.30,9.80)}\pgflineto{\pgfxy(140.00,7.00)}\pgfclosepath\pgfstroke
\pgfmoveto{\pgfxy(123.00,-10.00)}\pgflineto{\pgfxy(117.00,-10.00)}\pgfstroke
\pgfmoveto{\pgfxy(117.00,-10.00)}\pgflineto{\pgfxy(119.80,-10.70)}\pgflineto{\pgfxy(119.80,-9.30)}\pgflineto{\pgfxy(117.00,-10.00)}\pgfclosepath\pgffill
\pgfmoveto{\pgfxy(117.00,-10.00)}\pgflineto{\pgfxy(119.80,-10.70)}\pgflineto{\pgfxy(119.80,-9.30)}\pgflineto{\pgfxy(117.00,-10.00)}\pgfclosepath\pgfstroke
\pgfmoveto{\pgfxy(73.00,-10.00)}\pgflineto{\pgfxy(67.00,-10.00)}\pgfstroke
\pgfmoveto{\pgfxy(67.00,-10.00)}\pgflineto{\pgfxy(69.80,-10.70)}\pgflineto{\pgfxy(69.80,-9.30)}\pgflineto{\pgfxy(67.00,-10.00)}\pgfclosepath\pgffill
\pgfmoveto{\pgfxy(67.00,-10.00)}\pgflineto{\pgfxy(69.80,-10.70)}\pgflineto{\pgfxy(69.80,-9.30)}\pgflineto{\pgfxy(67.00,-10.00)}\pgfclosepath\pgfstroke
\pgfputat{\pgfxy(26.00,39.00)}{\pgfbox[bottom,left]{\fontsize{11.38}{13.66}\selectfont $T_0$}}
\pgfputat{\pgfxy(76.00,39.00)}{\pgfbox[bottom,left]{\fontsize{11.38}{13.66}\selectfont $T_1$}}
\pgfputat{\pgfxy(126.00,39.00)}{\pgfbox[bottom,left]{\fontsize{11.38}{13.66}\selectfont $T_2$}}
\pgfputat{\pgfxy(126.00,-1.00)}{\pgfbox[bottom,left]{\fontsize{11.38}{13.66}\selectfont $T_3$}}
\pgfputat{\pgfxy(76.00,-1.00)}{\pgfbox[bottom,left]{\fontsize{11.38}{13.66}\selectfont $T_4$}}
\pgfputat{\pgfxy(26.00,-1.00)}{\pgfbox[bottom,left]{\fontsize{11.38}{13.66}\selectfont $T_5$}}
\pgfsetlinewidth{0.30mm}\pgfmoveto{\pgfxy(50.00,20.00)}\pgflineto{\pgfxy(40.00,10.00)}\pgfstroke
\pgfmoveto{\pgfxy(50.00,20.00)}\pgflineto{\pgfxy(30.00,10.00)}\pgfstroke
\pgfcircle[fill]{\pgfxy(30.00,10.00)}{0.50mm}
\pgfcircle[stroke]{\pgfxy(30.00,10.00)}{0.50mm}
\pgfcircle[fill]{\pgfxy(40.00,10.00)}{0.50mm}
\pgfcircle[stroke]{\pgfxy(40.00,10.00)}{0.50mm}
\pgfmoveto{\pgfxy(50.00,20.00)}\pgflineto{\pgfxy(50.00,10.00)}\pgfstroke
\pgfcircle[fill]{\pgfxy(50.00,10.00)}{0.50mm}
\pgfcircle[stroke]{\pgfxy(50.00,10.00)}{0.50mm}
\pgfputat{\pgfxy(44.00,9.00)}{\pgfbox[bottom,left]{\fontsize{11.38}{13.66}\selectfont 12}}
\pgfputat{\pgfxy(36.00,9.00)}{\pgfbox[bottom,left]{\fontsize{11.38}{13.66}\selectfont 9}}
\pgfputat{\pgfxy(26.00,9.00)}{\pgfbox[bottom,left]{\fontsize{11.38}{13.66}\selectfont 5}}
\pgfputat{\pgfxy(86.10,39.00)}{\pgfbox[bottom,left]{\fontsize{11.38}{13.66}\selectfont 4}}
\pgfputat{\pgfxy(76.10,29.00)}{\pgfbox[bottom,left]{\fontsize{11.38}{13.66}\selectfont 1}}
\pgfcircle[fill]{\pgfxy(90.10,40.00)}{0.50mm}
\pgfcircle[stroke]{\pgfxy(90.10,40.00)}{0.50mm}
\pgfcircle[fill]{\pgfxy(80.10,30.00)}{0.50mm}
\pgfcircle[stroke]{\pgfxy(80.10,30.00)}{0.50mm}
\pgfcircle[fill]{\pgfxy(90.10,30.00)}{0.50mm}
\pgfcircle[stroke]{\pgfxy(90.10,30.00)}{0.50mm}
\pgfcircle[fill]{\pgfxy(100.10,30.00)}{0.50mm}
\pgfcircle[stroke]{\pgfxy(100.10,30.00)}{0.50mm}
\pgfcircle[fill]{\pgfxy(110.10,30.00)}{0.50mm}
\pgfcircle[stroke]{\pgfxy(110.10,30.00)}{0.50mm}
\pgfcircle[fill]{\pgfxy(80.10,20.00)}{0.50mm}
\pgfcircle[stroke]{\pgfxy(80.10,20.00)}{0.50mm}
\pgfcircle[fill]{\pgfxy(90.10,20.00)}{0.50mm}
\pgfcircle[stroke]{\pgfxy(90.10,20.00)}{0.50mm}
\pgfcircle[fill]{\pgfxy(100.10,20.00)}{0.50mm}
\pgfcircle[stroke]{\pgfxy(100.10,20.00)}{0.50mm}
\pgfcircle[fill]{\pgfxy(110.10,20.00)}{0.50mm}
\pgfcircle[stroke]{\pgfxy(110.10,20.00)}{0.50mm}
\pgfcircle[fill]{\pgfxy(110.10,10.00)}{0.50mm}
\pgfcircle[stroke]{\pgfxy(110.10,10.00)}{0.50mm}
\pgfcircle[fill]{\pgfxy(120.10,10.00)}{0.50mm}
\pgfcircle[stroke]{\pgfxy(120.10,10.00)}{0.50mm}
\pgfmoveto{\pgfxy(90.10,40.00)}\pgflineto{\pgfxy(80.10,30.00)}\pgfstroke
\pgfmoveto{\pgfxy(90.10,40.00)}\pgflineto{\pgfxy(90.10,30.00)}\pgfstroke
\pgfmoveto{\pgfxy(90.10,40.00)}\pgflineto{\pgfxy(100.10,30.00)}\pgfstroke
\pgfmoveto{\pgfxy(90.10,40.00)}\pgflineto{\pgfxy(110.10,30.00)}\pgfstroke
\pgfmoveto{\pgfxy(110.10,30.00)}\pgflineto{\pgfxy(110.10,20.00)}\pgfstroke
\pgfmoveto{\pgfxy(90.10,30.00)}\pgflineto{\pgfxy(80.10,20.00)}\pgfstroke
\pgfmoveto{\pgfxy(90.10,30.00)}\pgflineto{\pgfxy(90.10,20.00)}\pgfstroke
\pgfmoveto{\pgfxy(100.10,30.00)}\pgflineto{\pgfxy(100.10,20.00)}\pgfstroke
\pgfmoveto{\pgfxy(110.10,20.00)}\pgflineto{\pgfxy(110.10,10.00)}\pgfstroke
\pgfmoveto{\pgfxy(110.10,20.00)}\pgflineto{\pgfxy(120.10,10.00)}\pgfstroke
\pgfputat{\pgfxy(106.10,9.00)}{\pgfbox[bottom,left]{\fontsize{11.38}{13.66}\selectfont 2}}
\pgfputat{\pgfxy(86.10,29.00)}{\pgfbox[bottom,left]{\fontsize{11.38}{13.66}\selectfont 6}}
\pgfputat{\pgfxy(106.10,19.00)}{\pgfbox[bottom,left]{\fontsize{11.38}{13.66}\selectfont 8}}
\pgfputat{\pgfxy(76.10,19.00)}{\pgfbox[bottom,left]{\fontsize{11.38}{13.66}\selectfont 3}}
\pgfputat{\pgfxy(86.10,19.00)}{\pgfbox[bottom,left]{\fontsize{11.38}{13.66}\selectfont 7}}
\pgfputat{\pgfxy(114.10,9.00)}{\pgfbox[bottom,left]{\fontsize{11.38}{13.66}\selectfont 11}}
\pgfputat{\pgfxy(94.10,29.00)}{\pgfbox[bottom,left]{\fontsize{11.38}{13.66}\selectfont 13}}
\pgfputat{\pgfxy(94.10,19.00)}{\pgfbox[bottom,left]{\fontsize{11.38}{13.66}\selectfont 10}}
\pgfputat{\pgfxy(104.10,29.00)}{\pgfbox[bottom,left]{\fontsize{11.38}{13.66}\selectfont 14}}
\pgfputat{\pgfxy(136.10,39.00)}{\pgfbox[bottom,left]{\fontsize{11.38}{13.66}\selectfont 4}}
\pgfputat{\pgfxy(126.10,29.00)}{\pgfbox[bottom,left]{\fontsize{11.38}{13.66}\selectfont 1}}
\pgfcircle[fill]{\pgfxy(140.10,40.00)}{0.50mm}
\pgfcircle[stroke]{\pgfxy(140.10,40.00)}{0.50mm}
\pgfcircle[fill]{\pgfxy(130.10,30.00)}{0.50mm}
\pgfcircle[stroke]{\pgfxy(130.10,30.00)}{0.50mm}
\pgfcircle[fill]{\pgfxy(140.10,30.00)}{0.50mm}
\pgfcircle[stroke]{\pgfxy(140.10,30.00)}{0.50mm}
\pgfcircle[fill]{\pgfxy(150.10,30.00)}{0.50mm}
\pgfcircle[stroke]{\pgfxy(150.10,30.00)}{0.50mm}
\pgfcircle[fill]{\pgfxy(160.10,30.00)}{0.50mm}
\pgfcircle[stroke]{\pgfxy(160.10,30.00)}{0.50mm}
\pgfcircle[fill]{\pgfxy(130.10,20.00)}{0.50mm}
\pgfcircle[stroke]{\pgfxy(130.10,20.00)}{0.50mm}
\pgfcircle[fill]{\pgfxy(140.10,20.00)}{0.50mm}
\pgfcircle[stroke]{\pgfxy(140.10,20.00)}{0.50mm}
\pgfcircle[fill]{\pgfxy(150.10,20.00)}{0.50mm}
\pgfcircle[stroke]{\pgfxy(150.10,20.00)}{0.50mm}
\pgfcircle[fill]{\pgfxy(160.10,20.00)}{0.50mm}
\pgfcircle[stroke]{\pgfxy(160.10,20.00)}{0.50mm}
\pgfmoveto{\pgfxy(140.10,40.00)}\pgflineto{\pgfxy(130.10,30.00)}\pgfstroke
\pgfmoveto{\pgfxy(140.10,40.00)}\pgflineto{\pgfxy(140.10,30.00)}\pgfstroke
\pgfmoveto{\pgfxy(140.10,40.00)}\pgflineto{\pgfxy(150.10,30.00)}\pgfstroke
\pgfmoveto{\pgfxy(140.10,40.00)}\pgflineto{\pgfxy(160.10,30.00)}\pgfstroke
\pgfmoveto{\pgfxy(160.10,30.00)}\pgflineto{\pgfxy(160.10,20.00)}\pgfstroke
\pgfmoveto{\pgfxy(140.10,30.00)}\pgflineto{\pgfxy(130.10,20.00)}\pgfstroke
\pgfmoveto{\pgfxy(140.10,30.00)}\pgflineto{\pgfxy(140.10,20.00)}\pgfstroke
\pgfmoveto{\pgfxy(150.10,30.00)}\pgflineto{\pgfxy(150.10,20.00)}\pgfstroke
\pgfputat{\pgfxy(136.10,29.00)}{\pgfbox[bottom,left]{\fontsize{11.38}{13.66}\selectfont 6}}
\pgfputat{\pgfxy(156.10,19.00)}{\pgfbox[bottom,left]{\fontsize{11.38}{13.66}\selectfont 8}}
\pgfputat{\pgfxy(126.10,19.00)}{\pgfbox[bottom,left]{\fontsize{11.38}{13.66}\selectfont 3}}
\pgfputat{\pgfxy(136.10,19.00)}{\pgfbox[bottom,left]{\fontsize{11.38}{13.66}\selectfont 7}}
\pgfputat{\pgfxy(144.10,29.00)}{\pgfbox[bottom,left]{\fontsize{11.38}{13.66}\selectfont 13}}
\pgfputat{\pgfxy(144.10,19.00)}{\pgfbox[bottom,left]{\fontsize{11.38}{13.66}\selectfont 10}}
\pgfputat{\pgfxy(154.10,29.00)}{\pgfbox[bottom,left]{\fontsize{11.38}{13.66}\selectfont 14}}
\pgfputat{\pgfxy(36.00,-1.00)}{\pgfbox[bottom,left]{\fontsize{11.38}{13.66}\selectfont 4}}
\pgfputat{\pgfxy(26.00,-11.00)}{\pgfbox[bottom,left]{\fontsize{11.38}{13.66}\selectfont 1}}
\pgfcircle[fill]{\pgfxy(40.00,0.00)}{0.50mm}
\pgfcircle[stroke]{\pgfxy(40.00,0.00)}{0.50mm}
\pgfcircle[fill]{\pgfxy(30.00,-10.00)}{0.50mm}
\pgfcircle[stroke]{\pgfxy(30.00,-10.00)}{0.50mm}
\pgfcircle[fill]{\pgfxy(40.00,-10.00)}{0.50mm}
\pgfcircle[stroke]{\pgfxy(40.00,-10.00)}{0.50mm}
\pgfcircle[fill]{\pgfxy(50.00,-10.00)}{0.50mm}
\pgfcircle[stroke]{\pgfxy(50.00,-10.00)}{0.50mm}
\pgfcircle[fill]{\pgfxy(60.00,-10.00)}{0.50mm}
\pgfcircle[stroke]{\pgfxy(60.00,-10.00)}{0.50mm}
\pgfmoveto{\pgfxy(40.00,0.00)}\pgflineto{\pgfxy(30.00,-10.00)}\pgfstroke
\pgfmoveto{\pgfxy(40.00,0.00)}\pgflineto{\pgfxy(40.00,-10.00)}\pgfstroke
\pgfmoveto{\pgfxy(40.00,0.00)}\pgflineto{\pgfxy(50.00,-10.00)}\pgfstroke
\pgfmoveto{\pgfxy(40.00,0.00)}\pgflineto{\pgfxy(60.00,-10.00)}\pgfstroke
\pgfputat{\pgfxy(36.00,-11.00)}{\pgfbox[bottom,left]{\fontsize{11.38}{13.66}\selectfont 6}}
\pgfputat{\pgfxy(44.00,-11.00)}{\pgfbox[bottom,left]{\fontsize{11.38}{13.66}\selectfont 13}}
\pgfputat{\pgfxy(54.00,-11.00)}{\pgfbox[bottom,left]{\fontsize{11.38}{13.66}\selectfont 14}}
\pgfputat{\pgfxy(86.10,-1.00)}{\pgfbox[bottom,left]{\fontsize{11.38}{13.66}\selectfont 4}}
\pgfputat{\pgfxy(76.10,-11.00)}{\pgfbox[bottom,left]{\fontsize{11.38}{13.66}\selectfont 1}}
\pgfcircle[fill]{\pgfxy(90.10,0.00)}{0.50mm}
\pgfcircle[stroke]{\pgfxy(90.10,0.00)}{0.50mm}
\pgfcircle[fill]{\pgfxy(80.10,-10.00)}{0.50mm}
\pgfcircle[stroke]{\pgfxy(80.10,-10.00)}{0.50mm}
\pgfcircle[fill]{\pgfxy(90.10,-10.00)}{0.50mm}
\pgfcircle[stroke]{\pgfxy(90.10,-10.00)}{0.50mm}
\pgfcircle[fill]{\pgfxy(100.10,-10.00)}{0.50mm}
\pgfcircle[stroke]{\pgfxy(100.10,-10.00)}{0.50mm}
\pgfcircle[fill]{\pgfxy(110.10,-10.00)}{0.50mm}
\pgfcircle[stroke]{\pgfxy(110.10,-10.00)}{0.50mm}
\pgfcircle[fill]{\pgfxy(80.10,-20.00)}{0.50mm}
\pgfcircle[stroke]{\pgfxy(80.10,-20.00)}{0.50mm}
\pgfcircle[fill]{\pgfxy(90.10,-20.00)}{0.50mm}
\pgfcircle[stroke]{\pgfxy(90.10,-20.00)}{0.50mm}
\pgfmoveto{\pgfxy(90.10,0.00)}\pgflineto{\pgfxy(80.10,-10.00)}\pgfstroke
\pgfmoveto{\pgfxy(90.10,0.00)}\pgflineto{\pgfxy(90.10,-10.00)}\pgfstroke
\pgfmoveto{\pgfxy(90.10,0.00)}\pgflineto{\pgfxy(100.10,-10.00)}\pgfstroke
\pgfmoveto{\pgfxy(90.10,0.00)}\pgflineto{\pgfxy(110.10,-10.00)}\pgfstroke
\pgfmoveto{\pgfxy(90.10,-10.00)}\pgflineto{\pgfxy(80.10,-20.00)}\pgfstroke
\pgfmoveto{\pgfxy(90.10,-10.00)}\pgflineto{\pgfxy(90.10,-20.00)}\pgfstroke
\pgfputat{\pgfxy(86.10,-11.00)}{\pgfbox[bottom,left]{\fontsize{11.38}{13.66}\selectfont 6}}
\pgfputat{\pgfxy(76.10,-21.00)}{\pgfbox[bottom,left]{\fontsize{11.38}{13.66}\selectfont 3}}
\pgfputat{\pgfxy(86.10,-21.00)}{\pgfbox[bottom,left]{\fontsize{11.38}{13.66}\selectfont 7}}
\pgfputat{\pgfxy(94.10,-11.00)}{\pgfbox[bottom,left]{\fontsize{11.38}{13.66}\selectfont 13}}
\pgfputat{\pgfxy(104.10,-11.00)}{\pgfbox[bottom,left]{\fontsize{11.38}{13.66}\selectfont 14}}
\pgfputat{\pgfxy(136.10,-1.00)}{\pgfbox[bottom,left]{\fontsize{11.38}{13.66}\selectfont 4}}
\pgfputat{\pgfxy(126.10,-11.00)}{\pgfbox[bottom,left]{\fontsize{11.38}{13.66}\selectfont 1}}
\pgfcircle[fill]{\pgfxy(140.10,0.00)}{0.50mm}
\pgfcircle[stroke]{\pgfxy(140.10,0.00)}{0.50mm}
\pgfcircle[fill]{\pgfxy(130.10,-10.00)}{0.50mm}
\pgfcircle[stroke]{\pgfxy(130.10,-10.00)}{0.50mm}
\pgfcircle[fill]{\pgfxy(140.10,-10.00)}{0.50mm}
\pgfcircle[stroke]{\pgfxy(140.10,-10.00)}{0.50mm}
\pgfcircle[fill]{\pgfxy(150.10,-10.00)}{0.50mm}
\pgfcircle[stroke]{\pgfxy(150.10,-10.00)}{0.50mm}
\pgfcircle[fill]{\pgfxy(160.10,-10.00)}{0.50mm}
\pgfcircle[stroke]{\pgfxy(160.10,-10.00)}{0.50mm}
\pgfcircle[fill]{\pgfxy(130.10,-20.00)}{0.50mm}
\pgfcircle[stroke]{\pgfxy(130.10,-20.00)}{0.50mm}
\pgfcircle[fill]{\pgfxy(140.10,-20.00)}{0.50mm}
\pgfcircle[stroke]{\pgfxy(140.10,-20.00)}{0.50mm}
\pgfcircle[fill]{\pgfxy(160.10,-20.00)}{0.50mm}
\pgfcircle[stroke]{\pgfxy(160.10,-20.00)}{0.50mm}
\pgfmoveto{\pgfxy(140.10,0.00)}\pgflineto{\pgfxy(130.10,-10.00)}\pgfstroke
\pgfmoveto{\pgfxy(140.10,0.00)}\pgflineto{\pgfxy(140.10,-10.00)}\pgfstroke
\pgfmoveto{\pgfxy(140.10,0.00)}\pgflineto{\pgfxy(150.10,-10.00)}\pgfstroke
\pgfmoveto{\pgfxy(140.10,0.00)}\pgflineto{\pgfxy(160.10,-10.00)}\pgfstroke
\pgfmoveto{\pgfxy(160.10,-10.00)}\pgflineto{\pgfxy(160.10,-20.00)}\pgfstroke
\pgfmoveto{\pgfxy(140.10,-10.00)}\pgflineto{\pgfxy(130.10,-20.00)}\pgfstroke
\pgfmoveto{\pgfxy(140.10,-10.00)}\pgflineto{\pgfxy(140.10,-20.00)}\pgfstroke
\pgfputat{\pgfxy(136.10,-11.00)}{\pgfbox[bottom,left]{\fontsize{11.38}{13.66}\selectfont 6}}
\pgfputat{\pgfxy(156.10,-21.00)}{\pgfbox[bottom,left]{\fontsize{11.38}{13.66}\selectfont 8}}
\pgfputat{\pgfxy(126.10,-21.00)}{\pgfbox[bottom,left]{\fontsize{11.38}{13.66}\selectfont 3}}
\pgfputat{\pgfxy(136.10,-21.00)}{\pgfbox[bottom,left]{\fontsize{11.38}{13.66}\selectfont 7}}
\pgfputat{\pgfxy(144.10,-11.00)}{\pgfbox[bottom,left]{\fontsize{11.38}{13.66}\selectfont 13}}
\pgfputat{\pgfxy(154.10,-11.00)}{\pgfbox[bottom,left]{\fontsize{11.38}{13.66}\selectfont 14}}
\pgfsetdash{{0.60mm}{0.50mm}}{0mm}\pgfellipse[stroke]{\pgfxy(33.50,20.00)}{\pgfxy(9.50,0.00)}{\pgfxy(0.00,3.00)}
\pgfsetdash{}{0mm}\pgfellipse[stroke]{\pgfxy(38.50,10.00)}{\pgfxy(14.50,0.00)}{\pgfxy(0.00,3.00)}
\pgfsetdash{{0.60mm}{0.50mm}}{0mm}\pgfellipse[stroke]{\pgfxy(63.50,10.00)}{\pgfxy(9.50,0.00)}{\pgfxy(0.00,3.00)}
\pgfellipse[stroke]{\pgfxy(83.50,20.00)}{\pgfxy(9.50,0.00)}{\pgfxy(0.00,3.00)}
\pgfellipse[stroke]{\pgfxy(98.50,20.00)}{\pgfxy(4.50,0.00)}{\pgfxy(0.00,3.00)}
\pgfsetdash{}{0mm}\pgfellipse[stroke]{\pgfxy(113.50,10.00)}{\pgfxy(9.50,0.00)}{\pgfxy(0.00,3.00)}
\pgfsetdash{{0.60mm}{0.50mm}}{0mm}\pgfellipse[stroke]{\pgfxy(133.50,20.00)}{\pgfxy(9.50,0.00)}{\pgfxy(0.00,3.00)}
\pgfsetdash{}{0mm}\pgfellipse[stroke]{\pgfxy(148.50,20.00)}{\pgfxy(4.50,0.00)}{\pgfxy(0.00,3.00)}
\pgfsetdash{{0.60mm}{0.50mm}}{0mm}\pgfellipse[stroke]{\pgfxy(158.50,20.00)}{\pgfxy(4.50,0.00)}{\pgfxy(0.00,3.00)}
\pgfsetdash{}{0mm}\pgfellipse[stroke]{\pgfxy(158.50,-20.00)}{\pgfxy(4.50,0.00)}{\pgfxy(0.00,3.00)}
\pgfsetdash{{0.60mm}{0.50mm}}{0mm}\pgfellipse[stroke]{\pgfxy(133.50,-20.00)}{\pgfxy(9.50,0.00)}{\pgfxy(0.00,3.00)}
\pgfsetdash{}{0mm}\pgfellipse[stroke]{\pgfxy(83.50,-20.00)}{\pgfxy(9.50,0.00)}{\pgfxy(0.00,3.00)}
\pgfellipse[stroke]{\pgfxy(43.50,-10.00)}{\pgfxy(19.50,0.00)}{\pgfxy(0.00,3.00)}
\end{pgfpicture}%
\caption{An example of Pr\"ufer-like algorithm}
\label{prufertree}
\end{figure}

For a fixed $r \in [n]$,
let $\Tnkr$ be the set of trees on $[n]$ rooted at $r$ with $k$ non-empty child-groups.
We first define two preliminary mappings:

\subtitle{The sibship mapping $\phiglo: \Tnkr\to \Pinkr$}
For each $T \in \Tnkr$, let $\phiglo(T)$ be the set of all child-groups of $T$.
\medskip

Clearly, we have $\typeglo(T) = \type(\phiglo(T))$, and if $\lambda = \typeglo(T)$, then
$k=\ell(\lambda)$.
\subtitle{The paternity mapping $\psi: \Tnkr\to \Snkr$}
Starting from $T_{0}=T \in \Tnkr$, for $i=1,\ldots, k$, let $T_i$ be the tree obtained from $T_{i-1}$ by deleting the largest leaf-group $L_i$,
 set $\psi(T)=(p_1,p_2,\ldots,p_k)$, where $p_i$ is the parent of child-group $L_i$ in the tree $T_{i-1}$.

For example, the tree $T_0$ in Figure~\ref{prufertree} is rooted at  $r=4$ and
the non-empty child-groups of $T_0$ are:
\begin{align*}
G_4=\set{1,6,13,14}, \; G_6=\set{3,7}, \;G_8=\set{2,11}, \; G_{10}=\set{5,9,12}, \; G_{13}=\set{10}, \; G_{14}=\set{8},
\end{align*}
of which only $G_6$, $G_8$, and $G_{10}$ are the leaf-groups.
Hence
$$
\phiglo(T_0)=\set{G_{4}, G_{6}, G_{8}, G_{10}, G_{13}, G_{14}},
$$
and the maximal leaf-groups   in the  trees $T_0, \ldots, T_5$  are, respectively,
$$
L_1=G_{10},\quad L_2= G_{8},\quad
L_3= G_{13},\quad  L_4=G_{14},\quad  L_5= G_{6}, \quad L_6=G_{4}.
$$
So $\psi(T_0)=(10,8,13,14,6,4)$.

By construction, we have $\phiglo(T_i) = \phiglo(T_{i-1}) \setminus \set{L_i}$ for all $i \ge 0$,
so $L_i$ belongs to $\phiglo(T)$ for all $i$.
Since the number of child-groups of $T\in\Tnkr$ is equal to $k=\ell(\lambda)$,
this implies that $p_k = r$.
Because each child-group is deleted only once, the corresponding non-leaf vertex (parent) appears in $\psi(T)$ once and only once.
This means that $(p_{1},\ldots, p_k)$ is a $k$-permutation in $\Snkr$.
The following result shows that the pair of mappings $(\phiglo, \psi)$ defines a {\em Pr\"ufer-like algorithm} for rooted labeled trees.
\begin{thm}\label{prufer}
For all $k \in [n-1]$, the mapping $T \mapsto (\phiglo(T), \psi(T))$
is a bijection from $\Tnkr$ to $\Pinkr\times \Snkr$
such that $$\typeglo(T) = \type(\phiglo(T)).$$
\end{thm}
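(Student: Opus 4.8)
The plan is to treat the easy part first and then build an explicit two-sided inverse. The type identity is immediate: the global indegree of a vertex $v$ is the number of edges directed into it, i.e. the number of children of $v$, which is exactly $\abs{G_v}$; hence the multiset of block sizes of $\phiglo(T)$ records the global indegrees, giving $\typeglo(T)=\type(\phiglo(T))$. It remains to prove that $F:=(\phiglo,\psi)$ is a bijection. Since well-definedness of $F$ is already checked in the text ($\phiglo(T)$ partitions $[n]\setminus\set{r}$ into $k$ blocks and $\psi(T)\in\Snkr$), I would exhibit an explicit inverse $\Theta:\Pinkr\times\Snkr\to\Tnkr$. Given a pair $(\pi,(p_1,\ldots,p_k))$, I reconstruct a tree greedily: for $i=1,\ldots,k$ let $L_i$ be the largest (by maximal element) still-unused block of $\pi$ that is disjoint from $\set{p_i,p_{i+1},\ldots,p_k}$, and declare every element of $L_i$ to be a child of $p_i$.

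The conceptual step linking the two directions is a combinatorial description of leaf-groups. Since $L_j=G_{p_j}$ is the full child-group of $p_j$, deleting $L_1,\ldots,L_{i-1}$ turns exactly $p_1,\ldots,p_{i-1}$ into leaves, so the vertices still possessing children in $T_{i-1}$ are precisely $p_i,\ldots,p_k$. Consequently an undeleted block $B$ is a leaf-group of $T_{i-1}$ if and only if $B\cap\set{p_i,\ldots,p_k}=\emptyset$. This is exactly the rule used by $\Theta$, so the greedy choices of $\Theta$ reproduce the deletions of $\psi$ step by step; running $\Theta$ on $F(T)$ therefore recovers each $L_i$ together with its parent $p_i$ and rebuilds $T$. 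This gives $\Theta\circ F=\mathrm{id}$, and in particular injectivity of $F$, by induction on $i$.

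The main obstacle is to show that $\Theta$ is well-defined on \emph{all} of $\Pinkr\times\Snkr$, i.e. that the greedy process never stalls and that its output genuinely lies in $\Tnkr$. Non-stalling follows from a clean pigeonhole count: at step $i$ there are $m:=k-i+1$ unused blocks, while the forbidden set $\set{p_i,\ldots,p_k}$ has $m$ distinct elements, exactly one of which ($r=p_k$) lies in no block of $\pi$; the remaining $m-1$ forbidden elements meet at most $m-1$ blocks in total, so at least one of the $m$ unused blocks avoids the forbidden set and $L_i$ exists. To see that the reconstructed parent function is a tree rooted at $r$, for a non-root vertex $v$ write $j=t(v)$ for the index with $v\in L_j$, so the parent of $v$ is $p_j$. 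If $p_j\ne r$, then $p_j$ lies in some block $L_{j'}$; since $L_{j'}$ is disjoint from $\set{p_{j'},\ldots,p_k}$ while $p_j\in L_{j'}$, the element $p_j$ cannot lie in $\set{p_{j'},\ldots,p_k}$, and distinctness of the $p$'s forces $j<j'$. Thus $t(\cdot)$ strictly increases along every chain of parents, so no cycle can occur and each chain terminates at the unique parentless vertex $r$; since each of the $k$ distinct parents receives exactly one nonempty block, the output has exactly $k$ child-groups and lies in $\Tnkr$.

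Finally, $F\circ\Theta=\mathrm{id}$ follows from the same equivalence of selection rules: applying $\psi$ to $T=\Theta(\pi,(p_1,\ldots,p_k))$ deletes at step $i$ the largest leaf-group of $T_{i-1}$, which by the characterization above is the largest unused block disjoint from $\set{p_i,\ldots,p_k}$, namely the very block $L_i$ chosen by $\Theta$, whose parent is $p_i$; moreover $\phiglo(T)=\pi$ because the child-groups of $T$ are exactly $L_1,\ldots,L_k$. Hence $F$ and $\Theta$ are mutually inverse, $F$ is a bijection, and together with $\typeglo(T)=\type(\phiglo(T))$ this completes the proof.
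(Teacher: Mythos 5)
Your proof is correct and takes essentially the same route as the paper: the paper's proof consists precisely of your greedy inverse construction (at step $i$, take the largest remaining block disjoint from the yet-unused parents and join its elements to $p_i$) together with the same pigeonhole argument for the existence of $L_i$. The extra verifications you supply --- the characterization of leaf-groups of $T_{i-1}$ as the unused blocks disjoint from $\{p_i,\ldots,p_k\}$, the acyclicity argument via the strictly increasing index $t(\cdot)$, and the explicit checks of both compositions --- fill in details the paper leaves implicit rather than constituting a different approach.
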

\begin{proof}
Given a partition $\pi=\set{\pi_1,\ldots,\pi_k} \in \Pinkr$ and
a $k$-permutation $\mathbf{p}=(p_1,\ldots,p_{k}) \in \Snkr$,
we can construct the tree $T$ in $\Tnkr$ as follows.
For $i=1,2,\ldots,k$:
\begin{enumerate}[(a)]
\item \label{property} Order the blocks according to their maximal elements as in \eqref{eq:order}.
Let $L_i$ be the largest block of $\pi \setminus \set{L_1, \ldots, L_{i-1}}$, which does not contain any number in $\set{p_i, p_{i+1}, \ldots, p_{k-1}}$.
\item Join each vertex in $L_i$ and $p_i$ by an edge.
\end{enumerate}
The existence of the block $L_i$ in \eqref{property} can be justified by a counting argument: there remain $k-(i-1)$ blocks in $\pi \setminus \set{L_1, \ldots, L_{i-1}}$ and we have to avoid $k-i$ values in $\set{p_i, p_{i+1}, \ldots, p_{k-1}}$, so there is at least one block without any of those values.
\end{proof}

For  example, if ${\bf p}=(10,8,13,14,6,4)\in \mathcal{S}_{14,6}^{(4)}$ and
$$\pi=\set{\set{1,6,13,14}, \set{5,9,12}, \set{2,11}, \set{10}, \set{8}, \set{3,7}}\in \Pi_{14,6}^{(4)},
$$
then
the inverse Pr\"ufer-like algorithm yields $L_{1}, \ldots, L_{6}$ as follows:
\begin{alignat*}{3}
L_1&=\set{5,9,2}, \quad & L_2&=\set{2,11}, \quad & L_3&=\set{10},\\
L_4&=\set{8}, \quad & L_5&=\set{3,7}, \quad & L_6&=\set{1,6,13,14}.
\end{alignat*}
Joining  each vertex in $L_i$  with $p_i$ ($1\leq i\leq 6$)  by an edge we recover  the tree $T_0$ in Figure~\ref{prufertree}.



\section{Proof of Theorem \ref{thm:main}}\label{mainsection}

\begin{figure}[t]
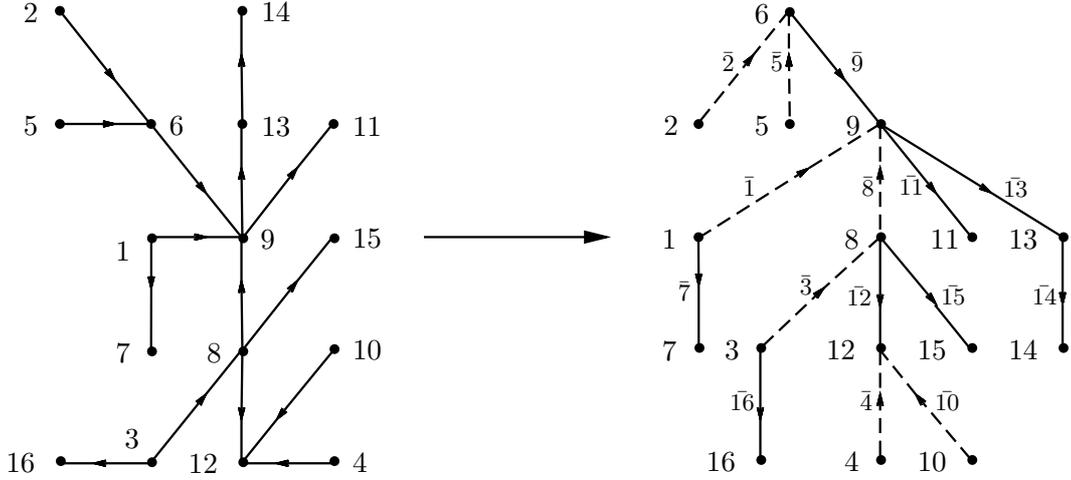

\centering
%
\caption{A tree $T$ hung up at $6$}
\label{label}
\end{figure}

Given a  tree $T\in\Tn$ and a fixed integer $r \in [n]$,
we can turn it
 as a tree rooted at $r$ by {\em hanging up} it  at $r$ as follows:
\begin{itemize}
\item
Draw the tree with  the  vertex $r$ at the top and  join $r$ to  the  vertices incident  to $r$, arranged in  increasing order from left to right,   by edges.
\item Suppose that we have drawn all the vertices with distance $i$ to $r$ (counted as the number of edges on the path to $r$), then
join each  vertex with distance $i$  to  its  incident vertices with distance $i+1$ to $r$, arranged in  increasing order from left to right;
\item Repeat the process until drawing all vertices.
 \end{itemize}
The hang-up action induces a global orientation of edges of $T$ toward the root $r$.
For  a tree  $T$ rooted at vertex $r$  we partition the edges in the following manner.
An edge is \emph{good}, respectively \emph{bad}, if its local orientation is oriented toward, respectively away from, the root $r$.
We label each edge $(vu)$ by $v$ if its global orientation is $v \to u$.
So the set of labels of all edges equals  $[n]\setminus\set{r}$ and putting  together  the labels of edges
oriented locally  toward to the same vertex yields a partition of $[n]\setminus\set{r}$, denoted by $\philoc(T)$.


For example, in Figure~\ref{label},  a tree is hung  up  at $6$,  where the dashed edges are good and
the labels of edges are barred  to avoid confusion.  The corresponding edge-label partition is
$$\phi_{loc}^{(6)}(T)=1~8~9/4~10~12/2~5/3/7/11/13/14/15/16,
$$
where the blocks are separated by a slash $/$.

Now we describe a map $\Phi_r$ from $\Tn$ to $\Tnr$, which will be shown to be a bijection.
\subsection{Construction of the mapping $\Phi_r$}
We define the mapping $\Phi_r$ in three steps.
\subtitle{Step 1: Move out good  edges}
Starting from a tree $T \in \Tn$, moving out the good edges in $T$, we get a set of rooted subtrees without any  good  edges, call them {\em increasing trees},
$I_T = \set{I_1,I_2, \ldots, I_d}$ and a matrix recording the cut good edges
$$
D_T =
\left(
%
\caption{A increasing tree traversed in postorder}
\label{postorder}
\end{figure}

An example of postorder is given in Figure~\ref{postorder}.
\subtitle{Step 2: Read vertices in increasing trees in postorder}
For each increasing tree $I_h$ we construct a {\em linear tree} $J_h=v_1 \to \cdots \to v_l$, of which every vertex has at most one child, and a cyclic permutation $\sigma_h = (v_1, \ldots , v_l)$, where $v_1, \ldots, v_l$ are the vertices of $I_h$  ordered  by postorder.
So the last $v_l$ is the root of the tree $I_h$ and also the minimum in the sequence $v_1, \ldots, v_l$.
Define $J_T=\set{J_1,\ldots, J_d}$
and the matrix
$$\sigma(D_T)=
\left(
  \begin{array}{cccc}
    \sigma(j_1) & \sigma(j_2) & \cdots & \sigma(j_{d-1}) \\
    i_1 & i_2 & \cdots & i_{d-1} \\
  \end{array}
\right),
$$
where $\sigma=\sigma_1 \ldots \sigma_d$.

In  the above example,
we have
\begin{equation}
\centering
\begin{pgfpicture}{-79.11mm}{-23.86mm}{55.06mm}{24.14mm}
\pgfsetxvec{\pgfpoint{1.00mm}{0mm}}
\pgfsetyvec{\pgfpoint{0mm}{1.00mm}}
\color[rgb]{0,0,0}\pgfsetlinewidth{0.30mm}\pgfsetdash{}{0mm}
\pgfmoveto{\pgfxy(-60.00,16.00)}\pgflineto{\pgfxy(-60.00,10.00)}\pgfstroke
\pgfmoveto{\pgfxy(-60.00,16.00)}\pgflineto{\pgfxy(-60.70,13.20)}\pgflineto{\pgfxy(-60.00,16.00)}\pgflineto{\pgfxy(-59.30,13.20)}\pgflineto{\pgfxy(-60.00,16.00)}\pgfclosepath\pgffill
\pgfmoveto{\pgfxy(-60.00,16.00)}\pgflineto{\pgfxy(-60.70,13.20)}\pgflineto{\pgfxy(-60.00,16.00)}\pgflineto{\pgfxy(-59.30,13.20)}\pgflineto{\pgfxy(-60.00,16.00)}\pgfclosepath\pgfstroke
\pgfmoveto{\pgfxy(-60.00,20.00)}\pgflineto{\pgfxy(-60.00,16.00)}\pgfstroke
\pgfcircle[fill]{\pgfxy(-60.00,20.00)}{0.50mm}
\pgfcircle[stroke]{\pgfxy(-60.00,20.00)}{0.50mm}
\pgfcircle[fill]{\pgfxy(-59.96,10.11)}{0.50mm}
\pgfcircle[stroke]{\pgfxy(-59.96,10.11)}{0.50mm}
\pgfcircle[fill]{\pgfxy(-40.00,10.00)}{0.50mm}
\pgfcircle[stroke]{\pgfxy(-40.00,10.00)}{0.50mm}
\pgfcircle[fill]{\pgfxy(-60.00,0.00)}{0.50mm}
\pgfcircle[stroke]{\pgfxy(-60.00,0.00)}{0.50mm}
\pgfcircle[fill]{\pgfxy(-60.00,-10.00)}{0.50mm}
\pgfcircle[stroke]{\pgfxy(-60.00,-10.00)}{0.50mm}
\pgfcircle[fill]{\pgfxy(-60.00,-20.00)}{0.50mm}
\pgfcircle[stroke]{\pgfxy(-60.00,-20.00)}{0.50mm}
\pgfcircle[fill]{\pgfxy(-40.00,0.00)}{0.50mm}
\pgfcircle[stroke]{\pgfxy(-40.00,0.00)}{0.50mm}
\pgfcircle[fill]{\pgfxy(-20.00,0.00)}{0.50mm}
\pgfcircle[stroke]{\pgfxy(-20.00,0.00)}{0.50mm}
\pgfputat{\pgfxy(-62.00,19.00)}{\pgfbox[bottom,left]{\fontsize{11.38}{13.66}\selectfont \makebox[0pt][r]{6}}}
\pgfputat{\pgfxy(-42.00,9.00)}{\pgfbox[bottom,left]{\fontsize{11.38}{13.66}\selectfont \makebox[0pt][r]{8}}}
\pgfputat{\pgfxy(-62.00,9.00)}{\pgfbox[bottom,left]{\fontsize{11.38}{13.66}\selectfont \makebox[0pt][r]{9}}}
\pgfputat{\pgfxy(-61.96,-0.88)}{\pgfbox[bottom,left]{\fontsize{11.38}{13.66}\selectfont \makebox[0pt][r]{13}}}
\pgfputat{\pgfxy(-41.90,-0.90)}{\pgfbox[bottom,left]{\fontsize{11.38}{13.66}\selectfont \makebox[0pt][r]{15}}}
\pgfputat{\pgfxy(-62.00,-11.00)}{\pgfbox[bottom,left]{\fontsize{11.38}{13.66}\selectfont \makebox[0pt][r]{14}}}
\pgfputat{\pgfxy(-62.00,-21.00)}{\pgfbox[bottom,left]{\fontsize{11.38}{13.66}\selectfont \makebox[0pt][r]{11}}}
\pgfputat{\pgfxy(-22.00,-1.00)}{\pgfbox[bottom,left]{\fontsize{11.38}{13.66}\selectfont \makebox[0pt][r]{16}}}
\pgfmoveto{\pgfxy(-60.00,6.00)}\pgflineto{\pgfxy(-60.00,0.00)}\pgfstroke
\pgfmoveto{\pgfxy(-60.00,6.00)}\pgflineto{\pgfxy(-60.70,3.20)}\pgflineto{\pgfxy(-60.00,6.00)}\pgflineto{\pgfxy(-59.30,3.20)}\pgflineto{\pgfxy(-60.00,6.00)}\pgfclosepath\pgffill
\pgfmoveto{\pgfxy(-60.00,6.00)}\pgflineto{\pgfxy(-60.70,3.20)}\pgflineto{\pgfxy(-60.00,6.00)}\pgflineto{\pgfxy(-59.30,3.20)}\pgflineto{\pgfxy(-60.00,6.00)}\pgfclosepath\pgfstroke
\pgfmoveto{\pgfxy(-60.00,10.00)}\pgflineto{\pgfxy(-60.00,6.00)}\pgfstroke
\pgfmoveto{\pgfxy(-40.00,6.00)}\pgflineto{\pgfxy(-40.00,0.00)}\pgfstroke
\pgfmoveto{\pgfxy(-40.00,6.00)}\pgflineto{\pgfxy(-40.70,3.20)}\pgflineto{\pgfxy(-40.00,6.00)}\pgflineto{\pgfxy(-39.30,3.20)}\pgflineto{\pgfxy(-40.00,6.00)}\pgfclosepath\pgffill
\pgfmoveto{\pgfxy(-40.00,6.00)}\pgflineto{\pgfxy(-40.70,3.20)}\pgflineto{\pgfxy(-40.00,6.00)}\pgflineto{\pgfxy(-39.30,3.20)}\pgflineto{\pgfxy(-40.00,6.00)}\pgfclosepath\pgfstroke
\pgfmoveto{\pgfxy(-40.00,10.00)}\pgflineto{\pgfxy(-40.00,6.00)}\pgfstroke
\pgfmoveto{\pgfxy(-60.00,-14.00)}\pgflineto{\pgfxy(-60.00,-20.00)}\pgfstroke
\pgfmoveto{\pgfxy(-60.00,-14.00)}\pgflineto{\pgfxy(-60.70,-16.80)}\pgflineto{\pgfxy(-60.00,-14.00)}\pgflineto{\pgfxy(-59.30,-16.80)}\pgflineto{\pgfxy(-60.00,-14.00)}\pgfclosepath\pgffill
\pgfmoveto{\pgfxy(-60.00,-14.00)}\pgflineto{\pgfxy(-60.70,-16.80)}\pgflineto{\pgfxy(-60.00,-14.00)}\pgflineto{\pgfxy(-59.30,-16.80)}\pgflineto{\pgfxy(-60.00,-14.00)}\pgfclosepath\pgfstroke
\pgfmoveto{\pgfxy(-60.00,-10.00)}\pgflineto{\pgfxy(-60.00,-14.00)}\pgfstroke
\pgfmoveto{\pgfxy(-20.00,6.00)}\pgflineto{\pgfxy(-20.00,0.00)}\pgfstroke
\pgfmoveto{\pgfxy(-20.00,6.00)}\pgflineto{\pgfxy(-20.70,3.20)}\pgflineto{\pgfxy(-20.00,6.00)}\pgflineto{\pgfxy(-19.30,3.20)}\pgflineto{\pgfxy(-20.00,6.00)}\pgfclosepath\pgffill
\pgfmoveto{\pgfxy(-20.00,6.00)}\pgflineto{\pgfxy(-20.70,3.20)}\pgflineto{\pgfxy(-20.00,6.00)}\pgflineto{\pgfxy(-19.30,3.20)}\pgflineto{\pgfxy(-20.00,6.00)}\pgfclosepath\pgfstroke
\pgfmoveto{\pgfxy(-20.00,10.00)}\pgflineto{\pgfxy(-20.00,6.00)}\pgfstroke
\pgfcircle[fill]{\pgfxy(-20.00,10.00)}{0.50mm}
\pgfcircle[stroke]{\pgfxy(-20.00,10.00)}{0.50mm}
\pgfputat{\pgfxy(-22.00,9.00)}{\pgfbox[bottom,left]{\fontsize{11.38}{13.66}\selectfont \makebox[0pt][r]{3}}}
\pgfcircle[fill]{\pgfxy(-7.97,10.02)}{0.50mm}
\pgfcircle[stroke]{\pgfxy(-7.97,10.02)}{0.50mm}
\pgfputat{\pgfxy(-9.89,9.00)}{\pgfbox[bottom,left]{\fontsize{11.38}{13.66}\selectfont \makebox[0pt][r]{2}}}
\pgfcircle[fill]{\pgfxy(4.22,10.00)}{0.50mm}
\pgfcircle[stroke]{\pgfxy(4.22,10.00)}{0.50mm}
\pgfputat{\pgfxy(2.22,9.00)}{\pgfbox[bottom,left]{\fontsize{11.38}{13.66}\selectfont \makebox[0pt][r]{5}}}
\pgfcircle[fill]{\pgfxy(16.33,10.00)}{0.50mm}
\pgfcircle[stroke]{\pgfxy(16.33,10.00)}{0.50mm}
\pgfputat{\pgfxy(14.33,9.00)}{\pgfbox[bottom,left]{\fontsize{11.38}{13.66}\selectfont \makebox[0pt][r]{7}}}
\pgfcircle[fill]{\pgfxy(28.53,10.02)}{0.50mm}
\pgfcircle[stroke]{\pgfxy(28.53,10.02)}{0.50mm}
\pgfputat{\pgfxy(26.44,9.00)}{\pgfbox[bottom,left]{\fontsize{11.38}{13.66}\selectfont \makebox[0pt][r]{1}}}
\pgfcircle[fill]{\pgfxy(40.55,10.00)}{0.50mm}
\pgfcircle[stroke]{\pgfxy(40.55,10.00)}{0.50mm}
\pgfputat{\pgfxy(38.55,9.00)}{\pgfbox[bottom,left]{\fontsize{11.38}{13.66}\selectfont \makebox[0pt][r]{4}}}
\pgfcircle[fill]{\pgfxy(52.56,10.02)}{0.50mm}
\pgfcircle[stroke]{\pgfxy(52.56,10.02)}{0.50mm}
\pgfputat{\pgfxy(50.66,9.00)}{\pgfbox[bottom,left]{\fontsize{11.38}{13.66}\selectfont \makebox[0pt][r]{10}}}
\pgfmoveto{\pgfxy(-60.00,-4.00)}\pgflineto{\pgfxy(-60.00,-10.00)}\pgfstroke
\pgfmoveto{\pgfxy(-60.00,-4.00)}\pgflineto{\pgfxy(-60.70,-6.80)}\pgflineto{\pgfxy(-60.00,-4.00)}\pgflineto{\pgfxy(-59.30,-6.80)}\pgflineto{\pgfxy(-60.00,-4.00)}\pgfclosepath\pgffill
\pgfmoveto{\pgfxy(-60.00,-4.00)}\pgflineto{\pgfxy(-60.70,-6.80)}\pgflineto{\pgfxy(-60.00,-4.00)}\pgflineto{\pgfxy(-59.30,-6.80)}\pgflineto{\pgfxy(-60.00,-4.00)}\pgfclosepath\pgfstroke
\pgfmoveto{\pgfxy(-60.00,0.00)}\pgflineto{\pgfxy(-60.00,-4.00)}\pgfstroke
\pgfcircle[fill]{\pgfxy(-40.00,-10.00)}{0.50mm}
\pgfcircle[stroke]{\pgfxy(-40.00,-10.00)}{0.50mm}
\pgfmoveto{\pgfxy(-40.00,-4.00)}\pgflineto{\pgfxy(-40.00,-10.00)}\pgfstroke
\pgfmoveto{\pgfxy(-40.00,-4.00)}\pgflineto{\pgfxy(-40.70,-6.80)}\pgflineto{\pgfxy(-40.00,-4.00)}\pgflineto{\pgfxy(-39.30,-6.80)}\pgflineto{\pgfxy(-40.00,-4.00)}\pgfclosepath\pgffill
\pgfmoveto{\pgfxy(-40.00,-4.00)}\pgflineto{\pgfxy(-40.70,-6.80)}\pgflineto{\pgfxy(-40.00,-4.00)}\pgflineto{\pgfxy(-39.30,-6.80)}\pgflineto{\pgfxy(-40.00,-4.00)}\pgfclosepath\pgfstroke
\pgfmoveto{\pgfxy(-39.95,0.08)}\pgflineto{\pgfxy(-40.00,-4.00)}\pgfstroke
\pgfputat{\pgfxy(-42.01,-10.77)}{\pgfbox[bottom,left]{\fontsize{11.38}{13.66}\selectfont \makebox[0pt][r]{12}}}
\pgfputat{\pgfxy(-70.00,-1.00)}{\pgfbox[bottom,left]{\fontsize{11.38}{13.66}\selectfont \makebox[0pt][r]{$J_T:$}}}
\end{pgfpicture}%
\label{lt}
\end{equation}
and three non-identical cyclic permutations corresponding to the first three trees:
\begin{equation}
\label{per}
\sigma_1=(11,14,13,9,6), \quad \sigma_2=(12,15,8),\quad \text{and}\quad\sigma_3=(16,3).
\end{equation}
Applying $\sigma$ to the matrix \eqref{dt}, we obtain
the matrix
\begin{equation}
\label{sdt}
\sigma(D_{T}) =
\left(
  \begin{array}{cccccccc}
    11 & 11 & 7 &12& 6 & 6  & 15 & 15 \\
    2 & 5 & 1 &3& 7 & 8  & 4 & 10 \\
  \end{array}
\right).
\end{equation}

For a graph $G$, let $V(G)$ be the set of all vertices in $G$.
Define the relation $\sim_G$ on its vertices  as follows:
$$
a \sim_G b \Leftrightarrow \text{$a,b$ are connected by a path in $G$ regardless of an orientation}.
$$
By definition, $I_T$ and $J_T$ are graphs with $d$ connected components. We shall identify an edge $i\rightarrow j$ with the column ${j\choose i}$ in the matrix $D_T$ and  $\sigma(D_T)$.

\begin{lem}
\label{connected}
In Step 2, for any vertex $v \not \sim_{J_T} r$, there is a unique sequence of edges
 ${\sigma(j_1) \choose i_1}$, ${\sigma(j_2) \choose i_2}, \ldots,
{\sigma(j_l) \choose i_l}$ in $\sigma(D_T)$  such that
\begin{align}\label{eq:chain}
v \sim_{J_T} i_1, \sigma(j_1) \sim_{J_T} i_2, \cdots, \sigma(j_{l-1})\sim_{J_T} i_l, \text{ and } \sigma(j_l) \sim_{J_T} r.
\end{align}
\end{lem}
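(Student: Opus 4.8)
The plan is to reduce the statement to a single structural picture: the good edges organize the $d$ increasing trees into a rooted tree, and the chain \eqref{eq:chain} is nothing but the unique path from the block of $v$ up to the block of $r$ in that rooted tree. Everything then follows by finiteness of a rooted tree.

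First I would record the two bookkeeping facts that make the passage from $D_T$ to $\sigma(D_T)$ harmless. Since each linear tree $J_h$ has exactly the same vertex set as the increasing tree $I_h$, the relations $\sim_{I_T}$ and $\sim_{J_T}$ induce the \emph{same} partition of the vertices into the $d$ blocks $V(I_1),\ldots,V(I_d)$; in particular $a\sim_{J_T}b$ holds precisely when $a$ and $b$ lie in one common increasing tree. Next, because $\sigma=\sigma_1\cdots\sigma_d$ is a product of the cycles $\sigma_h$ whose supports are exactly the sets $V(I_h)$, the permutation $\sigma$ fixes each block setwise, so $\sigma(j)\sim_{J_T} j$ for every $j$. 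Hence a column $\binom{\sigma(j)}{i}$ of $\sigma(D_T)$ joins the same pair of $\sim_{J_T}$-blocks as the original column $\binom{j}{i}$ of $D_T$.

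Then I would exploit the tree structure produced in Step~1. Cutting the $d-1$ good edges of the tree $T$ yields $d$ components, so contracting each increasing tree to a point turns the good edges into the edges of a tree on $d$ nodes; rooting it at the block that contains $r$ makes it a rooted tree on the blocks. By the Remark the roots of the increasing trees are $i_1,\ldots,i_{d-1}$ and $r$, these being distinct, and each non-$r$ block has a unique root among the $i_k$. Moreover each $i_k$ is the bottom entry of exactly one column $\binom{j_k}{i_k}$ of $D_T$, and by the description of good edges as $i\to j$ with $i$ the child-end, that column is precisely the good edge attaching the block of $i_k$ to its parent block (the one containing $j_k$). Thus every non-$r$ block carries a single distinguished ``upward'' column, namely the one whose bottom entry is its root.

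Finally I would assemble the chain. Given $v\not\sim_{J_T} r$, let $i_1$ be the root of the block of $v$, so $v\sim_{J_T} i_1$ and the column $\binom{\sigma(j_1)}{i_1}$ is forced; by the two facts above its top entry $\sigma(j_1)$ lies in the parent block, i.e.\ $\sigma(j_1)\sim_{J_T} i_2$ where $i_2$ is that parent's root, and the next column is again forced, and so on. Since moving toward the root in a finite rooted tree terminates, the process stops exactly when we first reach the $r$-block, that is when $\sigma(j_l)\sim_{J_T} r$, which is \eqref{eq:chain}; uniqueness is automatic, as at each stage the column is dictated by the root of the current block. The one point demanding care---and the main obstacle---is the verification that the bottom entry of a block's upward column really is that block's root, so that the chain climbs exactly one level at each step rather than wandering within a block; this is where the fact that $\sigma$ preserves blocks, together with the child-root description of good edges, is essential.
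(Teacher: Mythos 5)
Your proposal is correct and follows essentially the same route as the paper: the paper's proof also notes that $v\not\sim_{J_T} r$ forces $v\not\sim_{I_T} r$, extracts the unique sequence of good edges from the unique $v$--$r$ path in the tree $T$, and transfers it to $\sigma(D_T)$ using exactly your two bookkeeping facts, namely $V(I_h)=V(J_h)$ and $j\sim_{J_T}\sigma(j)$. Your contraction to a rooted tree of blocks, with the Remark's roots $i_1,\ldots,i_{d-1}$ pinning down the unique upward column of each non-$r$ block, is just a more explicit packaging of that same path argument (and makes the uniqueness mechanism slightly more visible than the paper's terse appeal to connectivity).
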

\begin{proof}
Since two connected components including $r$ in $I_T$ and $J_T$ have the same vertices, $v \not \sim_{J_T} r$ implies $v \not \sim_{I_T} r$.
Since $T$ is a tree (so connected), for any vertex $v \not \sim_{I_T} r$,
there is a unique sequence of good edges
$i_1 \to j_1, i_2 \to j_2, \ldots, i_l \to j_l$ such that $$v \sim_{I_T} i_1, j_1 \sim_{I_T} i_2, \cdots, j_{l-1}\sim_{I_T} i_l, \text{ and } j_l \sim_{I_T} r.$$
Since $V(I_h)= V(J_h)$ for all $h$ and $j \sim_{J_T} \sigma(j)$ for all $j$,
the edges ${\sigma(j_1) \choose i_1}, {\sigma(j_2) \choose i_2}, \ldots, {\sigma(j_l) \choose i_l}$ in $\sigma(D_T)$ satisfy the condition \eqref{eq:chain}.
\end{proof}

\ex In the previous example with $r=6$, if $v=10$ then the unique sequence of edges
in \eqref{sdt} satisfying \eqref{eq:chain} is ${15\choose 10}$ and ${6\choose 8}$.

\subtitle{Step 3: Construct  the rooted tree}
By Lemma~\ref{connected},
the linear trees in $J_{T}$ are connected by edges $i\to j$, where $j \choose i$ is a column in the matrix $\sigma(D_T)$.
This yields a tree $\Phi_r(T)$ rooted at $r$ (with the global orientation).

An example of the map $\Phi_r$ with step 3 is illustrated  in Figure~\ref{diagram},
where steps 1 and 2 are given in \eqref{increasing} and \eqref{dt}, \eqref{lt} and \eqref{sdt}.

Next we have to  show that the map $\Phi_r$ is a bijection.  As suggested by a referee, it is convenient to summarize the key properties of $\Phi_r$ before  the proof.
\subsection{Key properties of $\Phi_r$}
We denote by $I_{T}:=(I_h)_h$ the connected components of the graph made up of the bad edges, some components may be reduced to a single vertex. Each component $I_h$ contains a (spanning) tree made up of bad edges that is rooted at the vertex $r_h$ which is at minimal distance to the root $r$ among the vertices of $I_h$. If $r_h\ne r$, the path from $r_h$ to $r$ starts with an edge $e_h$ called the \emph{rooting edge} of $I_h$.
By definition, an edge is a rooting edge if and only if it is  a good edge.
Each component $I_h$ defines an edge set $C_h$ made up of the bad edges between two vertices of $I_h$ and the good edges incident to a vertex of $I_h$, except the rooting edge $e_h$, if any. The sets $(C_h)_h$ forms a partition of the edges of $T$: bad edge's endpoints appear in a single $I_h$ and a good edge is the rooting edge of one of its endpoint  and thus appears in the component defined by its other endpoint. All edges contributing to the local indegree of a vertex $v \in I_h$ in $T$ belong to $C_h$. The bijection will be defined independently on each set $C_h$ using only the additional (global) information of the root vertex $r_h$. The possible components $C_h$ are the trees rooted at $r_h$ where any child with a label lower than the label of its parent is a leaf. For any vertex $v \in I_h$ we denote by
$$L_h(v) := \set{w:(wv) \in C_h\text{ and }w \not\in I_h} $$
the set of its lower children, since $\forall w \in L_h(v)$, $w < v$. The post-order linear ordering of the vertices of $I_h$ leads to a cyclic permutation $\sigma_h$ of the vertices of $I_h$.

 The transformation by postorder  leads to a graph where for any vertex $v \ne r_h$ in $I_h$, the vertex $v$ and $L_{h}(v)$ form the sibship of the vertex $\sigma_h(v)$, so $v$ is the member of this new sibship with the biggest label. Moreover, the local indegree of $v$ was $1 +\abs{L_h(v)}$ and the new global degree of $\sigma_h(v)$ is the same. In the case of $r_h$ of local indegree $0 + \abs{L_{h}(r_h)}$, its lower children of $L_{h}(r_h)$ become the sibship of another vertex $v_l$ of $I_h$ whose new global indegree is also $0 + \abs{L_{h}(r_h)}$. In addition, all the vertices of $L_{h}(r_h)$, if any, are smaller than $r_h$ in particular the biggest label among $L_{h}(r_h)$. Thus the distribution local indegrees of vertices of $I_h$ becomes the distribution of global indegrees of vertices of $J_h$ after the transformation.
\subsection{Construction of the inverse mapping $\Phi_r^{-1}$}
Let $T\in \Tnr$. First we need to introduce some
definitions.
If $i \to j$ is an edge of $T$, we say that the vertex $i$ is a {\em child} of $j$.
The vertex $i$ is the {\em eldest child} of $j$ if $i$ is bigger than all other children (if any) of $j$ and the edge $i \to j$ is {\em eldest} if $i$ is the eldest child of $j$.
Note that deleting all non-eldest edges in $T$, we obtain a set of linear trees.
For a linear tree $v_1 \to \cdots \to v_l$ obtained from $T$ by deleting all non-eldest edges, an edge $i \to j$ is called a {\em minimal} if $i$ is a right-to-left minimum in the sequence $v_1, \ldots, v_l$.
Finally, an edge $i\to j$ of $T$  is {\em proper} if it is non-eldest or minimal.

For example, for the tree $T'$ in Figure~\ref{diagram}, the proper edges are dashed.
Moreover, the edges $7\to 6$, $8\to 6$, $4\to 15$, $10\to 15$ and $2\to 11$ are non-eldest, while $3\to 12$, $1\to 7$ and $5\to 11$ are minimal.

\begin{lem}\label{dash}
For a given tree $T$ with its local orientation, every improper edge $i \to j$ in $\Phi_r(T)$ corresponds to a column $j \choose i$ in $\sigma(D_T)$.
\end{lem}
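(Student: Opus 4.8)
The plan is to prove the precise form of the lemma, namely that the columns of $\sigma(D_T)$ are \emph{exactly} the improper edges of $\Phi_r(T)$; equivalently, an edge of $\Phi_r(T)$ is a column of $\sigma(D_T)$ if and only if it is non-eldest or minimal. By the construction in Step~3 every edge of $\Phi_r(T)$ is of exactly one of two types: an \emph{internal} edge $v\to\sigma_h(v)$ lying inside one of the linear trees $J_h$, or a \emph{connecting} edge $i_k\to\sigma(j_k)$ coming from a column $\binom{\sigma(j_k)}{i_k}$ of $\sigma(D_T)$; the connecting edges are, by definition, the columns. Thus it suffices to show that the connecting edges are precisely the non-eldest-or-minimal ones, while the internal edges are precisely the eldest non-minimal ones. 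I will read both pieces of data — the eldest-child relation and the right-to-left minima — directly off the postorder/cyclic description, using the sibship analysis recorded in the key properties of $\Phi_r$.

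First the eldest structure. For an internal edge $v\to\sigma_h(v)$ with $v\ne r_h$, the key-property computation says that the children of $\sigma_h(v)$ in $\Phi_r(T)$ are exactly $\{v\}\cup L_h(v)$ and that $v$ is the largest of them; hence every internal edge is eldest. For a connecting edge $i_k\to\sigma(j_k)$ the tail $i_k$ is a lower child of $j_k$ in $T$ (the good edge $i_k\to j_k$ has $i_k<j_k$), so $i_k\in L_{h'}(j_k)$ where $I_{h'}$ is the block containing $j_k$. If $j_k\ne r_{h'}$ the sibship of $\sigma(j_k)=\sigma_{h'}(j_k)$ is $\{j_k\}\cup L_{h'}(j_k)$, whose maximum is $j_k>i_k$, so the connecting edge is non-eldest. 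The only connecting edges that can be eldest are therefore those with $j_k=r_{h'}$ and $i_k=\max L_{h'}(r_{h'})$, because in that case $\sigma(r_{h'})$ receives precisely $L_{h'}(r_{h'})$ as its sibship.

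It remains to pin down the minimal edges. Deleting the non-eldest edges and using the previous paragraph, the surviving edges are all the internal edges together with the eldest connecting edges, and they assemble each maximal eldest-path as a concatenation of whole blocks $I_h$ joined by eldest connecting edges. The crucial inequality is that an eldest connecting edge has tail $i_k=\max L_{h'}(r_{h'})<r_{h'}=\min I_{h'}$, while $i_k$ is itself the root, hence the minimum, of its own block; iterating up the path shows the block-minima $r_h$ strictly increase from bottom to top. Consequently, reading an eldest-path from the top, each eldest connecting edge introduces a vertex $i_k$ strictly smaller than every vertex already seen, so its tail is a right-to-left minimum and the edge is minimal; inside a single block, by contrast, the minimum $r_h$ sits at the very top of that block, above every other tail of the block, so no internal edge is minimal.

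Combining the two analyses, a connecting edge is either non-eldest (when $j_k\ne r_{h'}$) or minimal (when $j_k=r_{h'}$ and $i_k=\max L_{h'}(r_{h'})$), hence improper, whereas every internal edge is eldest and non-minimal, hence not; this yields the equality $\{\text{improper edges}\}=\{\text{connecting edges}\}=\{\text{columns of }\sigma(D_T)\}$ and in particular the assertion of Lemma~\ref{dash}. I expect the minimality step of the third paragraph to be the main obstacle: everything hinges on the strict inequality $i_k<r_{h'}$ and its propagation up an eldest-path (the strictly increasing sequence of block-minima), which is what simultaneously forces each eldest connecting edge to be minimal and each internal tail to be dominated from above by its own block-minimum $r_h$. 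This inequality is exactly where the defining property $i_k<j_k$ of a good edge, together with the fact that labels increase toward the root along bad edges (so that $r_h=\min I_h$), enters the argument.
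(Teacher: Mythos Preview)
Your proof is correct and takes a more structural route than the paper's. Both arguments split the edges of $\Phi_r(T)$ into internal edges (lying in some $J_h$) and connecting edges (columns of $\sigma(D_T)$) and show that ``internal'' coincides with ``eldest and non-minimal''. The paper argues edge by edge: for a column $\binom{j}{i}$ it sets $k=\sigma^{-1}(j)$ and distinguishes whether $j$ is a leaf of its linear tree $J$; for an internal edge it uses $j=\sigma(i)$ directly. You instead invoke the sibship description from the key-properties subsection to classify eldest edges in one stroke, and then treat minimality by showing that each linear tree obtained after deleting non-eldest edges is a concatenation of whole blocks $J_h$ whose minima $r_h$ strictly increase toward the root.

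What your approach buys is a tighter grip on the minimality step. In the paper's subcase where $j$ is the leaf of $J$ and $i$ is the unique largest child of $j$, the paper writes ``the edge $i\to j$ is minimal in the linear tree $i\to j\to\cdots\to k$'', implicitly treating $k=r_{h'}$ as the top of that eldest-path; but the eldest-path can continue above $k$ through further blocks (e.g.\ when $T$ is the path $1\to 2\to\cdots\to r$, every edge is good, every $J_h$ is a singleton, and the single eldest-path is the whole tree). One still needs $i$ smaller than those higher vertices, and your chain $i<r_{h'}<r_{h''}<\cdots$ is exactly what supplies this. Two small remarks: in your final sentence the case $j_k=r_{h'}$ with $i_k\ne\max L_{h'}(r_{h'})$ is not listed explicitly among the parenthetical cases, though it is covered by your earlier assertion that only the $i_k=\max$ connecting edges can be eldest; and note that what you (following the lemma's wording) call ``improper'' is, by the paper's own definition of \emph{proper} as ``non-eldest or minimal'', actually \emph{proper}---the word ``improper'' in the lemma statement is evidently a typo, as the paper's own proof and the subsequent use $P_{\Phi_r(T)}=\sigma(D_T)$ confirm.
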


\begin{proof}
Let $i \to j$ be an edge in $\Phi_r(T)$ corresponding to a column $j \choose i$ in $\sigma(D_T)$.
Let $k=\sigma^{-1}(j)$.
Since $j \choose i$ is induced from a good edge $i \to k$, we have $i < k$.
Denote by $J$ the linear tree including $j$ obtained from $T$ by steps 1 and 2.
\begin{enumerate}
\item If $j$ is a non-leaf of $J$, then $k$ is a child of $j$. So $i$ cannot be the eldest child of $j$ and the edge $i \to j$ must be proper in $\Phi_r(T)$.
\item If $j$ is a leaf of $J$, then $J=j \to \cdots \to k$. Suppose that there exists another column $j \choose i'$ in $\sigma(D_T)$ such that $i' > i$, then the vertex $i$ cannot be the eldest child of $j$ and the edge $i \to j$ should be proper in $\Phi_r(T)$. Otherwise, since $k$ is also the minimum of $J$ and $i<k$, the vertex $i$ is smaller than all vertices between $j$ and $k$. That means the edge $i \to j$ is minimal in the linear tree $i \to j \to \cdots \to k$. Thus the edge $i \to j$ should be proper in $\Phi_r(T)$.
\end{enumerate}

Conversely, let $i \to j$ be an edge in $\Phi_r(T)$ such that $j \choose i$ is not a column in $\sigma(D_T)$. Since the edge $i \to j$ is obtained from some linear tree $J$, we have  $j=\sigma(i)$.
If $j$ has another child $k$ in $\Phi_r(T)$, then $j \choose k$ is a column in $\sigma(D_T)$. Since $j \choose k$ is induced from a good  edge, $k \to i$ implies $k<i$. That means the edge $i \to j$ is always eldest in $\Phi_r(T)$. Since $i$ is also bigger than the root of $J$, the edge  $i \to j$ cannot be minimal. Thus the edge $i \to j$ is not proper.
\end{proof}

The following two lemmas are our main results of this section.
\begin{lem}\label{main}
The map $\Phi_r: T \mapsto T'$ is a bijection from $\Tn$ to $\Tnr$.
\end{lem}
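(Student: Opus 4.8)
The plan is to establish bijectivity by constructing $\Phi_r^{-1}$ explicitly, undoing Steps~3, 2, 1 in that order, and then checking that the two compositions are identities. First I would confirm that $\Phi_r$ is well defined, i.e. that re-gluing the linear trees of $J_T$ along the columns of $\sigma(D_T)$ yields an element of $\Tnr$. This is exactly the content of Lemma~\ref{connected}: every vertex $v\not\sim_{J_T}r$ is joined to $r$ by a unique chain of reconnecting edges, so the resulting graph is connected; and since we adjoin precisely $d-1$ edges to the $d$ linear trees, which together span $[n]$, the graph is acyclic, hence a tree, rooted at $r$ by the global orientation.

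Next I would read the inverse directly off the proper/improper dichotomy. The decisive point is that an edge of an arbitrary $T'\in\Tnr$ can be classified as proper (non-eldest or minimal) using $T'$ alone, and that, by Lemma~\ref{dash}, when $T'=\Phi_r(T)$ these distinguished edges are exactly the columns $\binom{j}{i}$ of $\sigma(D_T)$, namely the reconnecting edges. Thus $\Phi_r^{-1}$ would: (i) remove the edges singled out by Lemma~\ref{dash}, recovering the linear trees $\{J_h\}$; (ii) on each chain $J_h=v_1\to\cdots\to v_l$ read off the cyclic permutation $\sigma_h=(v_1,\dots,v_l)$ (with $v_l$ the root and the minimum) and invert the post-order traversal to rebuild the increasing tree $I_h$; and (iii) restore each deleted edge $i\to j$ as the good edge $i\to\sigma^{-1}(j)$, where $\sigma=\sigma_1\cdots\sigma_d$, thereby re-gluing the $I_h$ into an unrooted tree on $[n]$.

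The main obstacle, and the step that deserves the most care, is the invertibility of Step~2. Flattening $I_h$ into a path by post-order destroys its branching, so I must argue that the branching is faithfully encoded elsewhere and can be reconstructed. This is where the lower-children sets $L_h(v)$ from the key-properties subsection are essential: in the forward map every non-root $v$ of $I_h$, together with $L_h(v)$, becomes the sibship of $\sigma_h(v)$, with $v$ the largest label in that sibship, while $L_h(r_h)$ is transferred to the appropriate vertex $v_l$. I would therefore show that, reading $T'$ alone, the non-eldest children recover each set $L_h(v)$ and the right-to-left minima recover the spine of each $J_h$, so that the sibship structure — and hence $I_h$ together with its $\sigma_h$ — is uniquely determined. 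Establishing this clean separation, so that no within-chain edge is ever mistaken for a reconnecting edge and vice versa, is the combinatorial crux, and it is precisely what Lemma~\ref{dash} and its converse furnish.

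Finally I would close the loop. For $\Phi_r^{-1}\circ\Phi_r=\mathrm{id}_{\Tn}$, Lemma~\ref{dash} guarantees that the proper edges deleted from $\Phi_r(T)$ are exactly the columns of $\sigma(D_T)$, so (i) recovers $J_T$, (ii) recovers each $I_h$ and $\sigma_h$, and (iii) restores the original good edges, returning $T$. Since $\abs{\Tn}=\abs{\Tnr}=n^{n-2}$, a single composition equal to the identity already forces $\Phi_r$ to be a bijection; for completeness I would also verify $\Phi_r\circ\Phi_r^{-1}=\mathrm{id}_{\Tnr}$ by running the converse direction of Lemma~\ref{dash}, so that the good edges produced by $\Phi_r^{-1}$ are sent back to the proper edges of $T'$.
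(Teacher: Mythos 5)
Your proposal is correct and takes essentially the same route as the paper: both invert $\Phi_r$ by deleting the proper edges (with Lemma~\ref{dash} identifying them as exactly the columns of $\sigma(D_T)$), rebuilding each increasing tree from its linear tree via the classical inverse-of-postorder algorithm, and regluing along the decreasing columns of $\sigma^{-1}(P_{T'})$, with Lemma~\ref{connected} ensuring the reglued graph is a tree rooted at $r$. The only incidental difference is your cardinality shortcut $\abs{\Tn}=\abs{\Tnr}=n^{n-2}$, which lets the single composition $\Phi_r^{-1}\circ\Phi_r=\mathrm{id}$ suffice, whereas the paper simply verifies the inverse procedure directly.
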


\begin{proof}
It suffices to define the inverse procedure.
Given a tree $T' \in \Tnr$, by cutting out all the proper edges in $T'$,
we get a set of linear trees (i.e., trees without any proper edges including singleton vertex)
$J_{T'} = \set{J_1,J_2, \ldots, J_d}$
and a matrix recording the cut proper edges
$$
P_{T'} =
\left(
  \begin{array}{cccc}
    j_1 & j_2 & \cdots & j_{d-1} \\
    i_1 & i_2 & \cdots & i_{d-1} \\
  \end{array}
\right)
$$
where each column ${j \choose i}$ corresponds to a proper edge $i \to j$ in $T'$.
Lemma~\ref{dash} yields $P_{\Phi_r(T)} = \sigma(D_{T})$ for any $T \in \Tn$.
For example, for the tree $T'$ in Figure~\ref{diagram}, we obtain the nine linear trees in \eqref{lt}
and the matrix in \eqref{sdt}.

To each linear tree $J_h=v_1 \to \cdots \to v_l$ with $v_l$ as root we associate the cyclic permutation $\sigma_h=(v_1, \ldots, v_l)$ and let $\sigma=\sigma_1 \ldots \sigma_d$. For the tree $T'$ in Figure~\ref{diagram}, we get the three non-trivial permutations in \eqref{per}. 

Define the matrix
$$\sigma^{-1}(P_{T'})=
\left(
  \begin{array}{cccc}
    \sigma^{-1}(j_1) & \sigma^{-1}(j_2) & \cdots & \sigma^{-1}(j_{d-1}) \\
    i_1 & i_2 & \cdots & i_{d-1} \\
  \end{array}
\right).
$$
Since each column $j \choose i$ of $P_{T'}$ corresponds to an proper edge $i \to j$, $\sigma^{-1}(j)$ is the eldest child of $j$ or the root of the linear tree containing $j$. Thus we have $\sigma^{-1}(j) > i$ and
the columns of matrix $\sigma^{-1}(P_{T'})$ are decreasing.
Continuing above example, we recover the matrix in \eqref{dt}.

Since we read vertices of increasing trees $I_h$ in postorder in $\Phi_r$, every cyclic permutation $\sigma_h=(v_1, \ldots, v_l)$ can also be changed to increasing tree $I_h$ using the {\em inverse of postorder algorithm}, which is the well-known algorithm (see \cite[P. 25]{MR1442260}) mapping cyclic permutations to increasing trees as follows:
Given a cyclic permutation $\sigma_h=(v_1, \ldots, v_l)$ with $v_l$ as minimum,
construct an increasing tree $I_h$ on $v_1, \ldots, v_l$ with the root $v_l$ by defining vertex $v_i$ to be the child of the leftmost vertex $v_j$ in $\sigma_h$ which follows $v_i$ and which is less than $v_i$.
Since the last $v_l$ is the minimum in all vertices of $J_h$, there exists such a vertex $v_j$ for all vertex $v_i$ except of $v_l$.
For example, applying the linear trees in \eqref{lt}, we recover the increasing trees in \eqref{increasing}.

Finally, merging all increasing trees $I_h$ by the good  edges in the matrix $\sigma^{-1}(P_T)$, we recover the tree $\Phi_r^{-1}(T') \in \Tn$, as illustrated in Figure~\ref{diagram}.
\end{proof}

\subsection{Further properties of the mapping $\Phi_r$}
Define the {\em sibship} of a vertex $v$ in a oriented tree $T$ hung up $r$ to be the set of labels of edges pointed to $v$ in $T$ and denote it by $\sib^{(r)}(T;v)$. For instance, $\sibloc^{(6)}(T;9)=\set{\bar{1},\bar{8}, \bar{9}}$ and $\sibglo^{(6)}(T;9)=\set{\bar{1},\bar{8}, \bar{11}, \bar{13}}$ where $T$ is a tree in Figure~\ref{label}.

\begin{lem}\label{sibship}
For a given tree $T$ hung up at $r$ with the local orientation and for any vertex $v$ of $T$,
the sibship of the vertex $v$ in $T$ is the same as the sibship of the vertex $\sigma(v)$ in $\Phi_r(T)$, i.e.,
$$\sibloc^{(r)}(T;v) = \sibglo^{(r)}(T';\sigma(v))$$
where $T'=\Phi_r(T)$ is a rooted tree with the global orientation.
Therefore, $\philoc(T) = \phiglo(T')$.
\end{lem}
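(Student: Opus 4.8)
The plan is to prove the vertex-wise identity $\sibloc^{(r)}(T;v)=\sibglo^{(r)}(T';\sigma(v))$ for every vertex $v$, and then to read off $\philoc(T)=\phiglo(T')$ as an immediate consequence. Indeed, since $\sigma=\sigma_1\cdots\sigma_d$ is a product of cycles supported on the vertex sets of the increasing trees $I_1,\dots,I_d$, which partition $[n]$, it is a permutation of $[n]$; letting $v$ run over $[n]$ then turns the collection of blocks $\{\sibloc^{(r)}(T;v)\}_v$ into $\{\sibglo^{(r)}(T';u)\}_u=\phiglo(T')$, so the two partitions of $[n]\setminus\set{r}$ coincide. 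I would check along the way that the empty blocks correspond (a vertex with no local in-edge maps to a leaf of $T'$), so nothing is lost by discarding them.

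For the identity I would first describe the left-hand side explicitly. Fix the increasing tree $I_h$ containing $v$, with root $r_h$. Recalling that the local orientation points each edge toward its larger endpoint and that an edge is labelled by its global child, the edges pointing locally into $v$ are exactly two kinds: the unique bad edge joining $v$ to its parent in $I_h$, present iff $v\ne r_h$, whose global child is $v$ itself and which therefore carries the label $v$; together with the good edges coming from the lower children $L_h(v)=\set{w:(wv)\in C_h,\ w\notin I_h}$, each carrying the label of the corresponding child. Hence $\sibloc^{(r)}(T;v)=\set{v}\cup L_h(v)$ if $v\ne r_h$, and $\sibloc^{(r)}(T;v)=L_h(v)$ if $v=r_h$.

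Next I would track these two groups of edges through Steps 2 and 3. In the linear tree $J_h=v_1\to\cdots\to v_l$ produced by postorder, the unique child of any non-source vertex $u$ is $\sigma_h^{-1}(u)$, so $\sigma(v)$ has $v$ as its linear-tree child precisely when $\sigma(v)\ne v_1$, i.e. when $v\ne v_l=r_h$; this is the $T'$-edge $v\to\sigma(v)$, whose global child is again $v$. The reconnection edges of Step 3 are read from $\sigma(D_T)$: a good edge $i\to j$ of $T$ becomes the edge $i\to\sigma(j)$ of $T'$, so the good edges of $T$ whose parent is $v$, namely those from $L_h(v)$, reattach as children of $\sigma(v)$ and keep their labels. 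Since $\sigma$ is injective, no other edge of $T'$ has head $\sigma(v)$. Therefore the children of $\sigma(v)$ in $T'$ are $\set{v}\cup L_h(v)$ when $v\ne r_h$ and $L_h(v)$ when $v=r_h$, with labels equal to those very vertices; comparing with the previous paragraph yields $\sibloc^{(r)}(T;v)=\sibglo^{(r)}(T';\sigma(v))$.

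The hard part will be the bookkeeping around the component root $r_h$ and the ``$+1$'' contributed by the bad parent edge. Concretely, one must verify that the single local in-edge at $v$ coming from inside $I_h$ (labelled $v$) is matched exactly by the single structural edge $v\to\sigma(v)$ of the linear tree (also labelled $v$), and that at $v=r_h$, where no such edge exists, the cyclic shift $\sigma_h(r_h)=v_1$ lands on the source of $J_h$, which receives no linear-tree child yet still collects the lower children $L_h(r_h)$, matching the local indegree $0+\abs{L_h(r_h)}$. This is precisely the passage from the local-indegree distribution on $I_h$ to the global-indegree distribution on $J_h$ recorded in the key properties of $\Phi_r$, so the structural content has already been isolated; what remains is to confirm that the labels, and not merely the cardinalities, are transported unchanged.
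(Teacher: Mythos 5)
Your proof is correct and follows essentially the same route as the paper's: both transport the two kinds of local in-edges at $v$ (the bad parent edge labeled $\bar{v}$, present iff $v\ne r_h$, and the good edges from the lower children $L_h(v)$) to in-edges of $\sigma(v)$ in $T'$, via the linear-tree edge $v\to\sigma(v)$ and the columns of $\sigma(D_T)$ respectively. The only difference is cosmetic: the paper proves the inclusion $\sibloc^{(r)}(T;v)\subseteq\sibglo^{(r)}(T';\sigma(v))$ by a three-case analysis on the label and then upgrades it to equality using disjointness of the sibships in $T'$ together with bijectivity of $\sigma$, whereas you obtain equality directly by enumerating, via injectivity of $\sigma$, all in-edges at $\sigma(v)$ in $T'$.
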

\begin{proof}
Let $T$ be a tree with the local orientation and $T'=\Phi_r(T)$. Let $\bar{k} \in \sibloc^{(r)}(T;v)$.
\begin{enumerate}
\item If $k<v$, we find a decreasing edge $k \stackrel{\bar{k}}{\to} v$. It becomes an edge $k \stackrel{\bar{k}}{\to} \sigma(v)$ in $T'$ under $\sigma$. Thus $\bar{k} \in \sibglo^{(r)}(T';\sigma(v))$.
\item If $k=v$, we find an increasing edge $i \stackrel{\bar{v}}{\to} v$ for some $i<v$. Since it is an edge in some increasing tree $I$, $v$ is not the root of $I$. Then we can find an edge $v \stackrel{\bar{v}}{\to} \sigma(v)$ in the linear tree corresponding to $I$. Thus $\bar{v} \in \sibglo^{(r)}(T';\sigma(v))$.
\item If $k>v$, the edge $k \leftarrow v$ points to $k$ which is impossible
\end{enumerate}
Since any two sibships are disjoint in $T'$, we have $$\sibloc^{(r)}(T;v) = \sibglo^{(r)}(T';\sigma(v))$$ where $T'=\Phi_r(T)$.
\end{proof}

Combining the above two lemmas we obtain Theorem~\ref{thm:main}.
%

\rmk
Let $r=1$.
Let $\pi$ be a partition of $ \{2,\ldots, n\}$ and  $\Tglopi$ (resp. $\Tlocpi$) be the set of trees with {\em sibship set-partition} $\pi$ induced by the sibship mapping $\phi_{glo}$ (resp. $\phi_{loc}$).
Combining two maps $\Phi_1$ and $\psi$ we obtain a bijective proof of Theorem 1.1 in \cite{DY10}.
Indeed, their set $T_{\pi}$ in \cite{DY10} is equal to our set $\Tlocpi$,  hence
$$
\abs{\Tlocpi}
\stackrel{\Phi_1}{=} \abs{\Tglopi}
 = \abs{(\phi_{glo})^{-1}(\pi)}
\stackrel{\psi}{=} \abs{\mathcal{S}_{n,\ell(\lambda)}^{(1)}}
=\dfrac{(n-1)!}{(n-\ell(\lambda))!}.
$$

At the end of their paper \cite{DY10}, Du and Yin  also asked for a bijection from $\Tnl$ to $\Pi_{n,\lambda}^{(1)} \times \mathcal{S}_{n,\ell(\lambda)}^{(1)}$ (in our notation).
 By Theorem~\ref{prufer}, the mapping $(\phi_{glo},\psi) \circ \Phi_1$ provides such a bijection. This is a generalization of Pr\"ufer code for labeled tress, which corresponds to the $\lambda=1^{n-1}$ case.

\section{Proof of Theorem \ref{thm:qbinomial}}\label{four}
Since $\qbin{n}{e_0,e_1,\ldots} [e_h]_q = [n]_q \qbin{n-1}{e_0,\ldots, e_h-1, \ldots}$,
the formula \eqref{eq:gen} is equivalent to
\begin{align}\label{eq:qbis}
\sum_{i\geq 0} \sum_{|\lambda|=m-1 \atop \ell(\lambda)\leq n}
&q^{(p+1)(m-i-1) + 2 n(\lambda) - 2\sum_{k=1}^i (\lambda'_k-1)}\nonumber\\
& \times   \qbin{p+i-l}{p} \qbin{n-1}{e_0,e_1,\ldots,e_h-1,\ldots}
=\qbin{n+m-2+p-l}{n-1+p}.
\end{align}

By using the formula \cite[Theorem 3.3]{MR1634067}
$$
(z;q)_N=\sum_{j=0}^N{N\brack j}_q (-1)^jz^jq^{j\choose 2}
$$
to expand $(z;q)_N$ and extracting the coefficient of $t^{k}$ in
$$
(-t;q)_{n+k-1}=(-t;q)_{k-1}(-tq^{k-1};q)_{n},
$$
we obtain the $q$-Chu-Vandermonde identity:
$$
\qbin{n+k-1}{k}=\sum_{r\geq 0}q^{r(r-1)}\qbin{n}{r}\qbin{k-1}{k-r}
$$
It is well-known \cite{MR1019844} (see also \cite{MR2200851} for some generalizations) that iterating the $q$-Chu-Vandermonde identity yields
\begin{align}
\qbin{n+k-1}{k}
&=\sum_{|\lambda|=k,\ell(\lambda)\leq n}q^{2n(\lambda)}{n\brack e_0,e_1,\ldots}_q.
\label{eq:lemma}
\end{align}

Using the formula \cite[Theorem 3.3]{MR1634067}
$$
\frac{1}{(z;q)_N}=\sum_{j=0}^\infty{N+j-1\brack j}_qz^j
$$
to expand $1/(z;q)_N$ and then extracting the coefficient of $x^{m-l-1}$ in the identity
$$
\frac{1}{(x;q)_{p+1}} \frac{1}{(xq^{p+1};q)_{n-1}}=\frac{1}{(x;q)_{p+n}},
$$
we obtain
$$
\sum_{t\geq 0}{p+t\brack t}_q{n+m-3-l-t\brack m-1-l-t}_qq^{(p+1)(m-1-l-t)}=
{n+p+m-2-l\brack m-1-l}_q.
$$
Shifting $t$ to $t-l$ we get
\begin{equation}
\label{eq:simple}
\sum_{t\geq 0}{p+t-l\brack p}_q{n+m-3-t\brack n-2}_qq^{(p+1)(m-1-t)}=
{n+p+m-2-l\brack m-1-l}_q.
\end{equation}

If $\lambda=1^{e_1}2^{e_2}\cdots$, letting $\mu=1^{e_1}2^{e_2}\cdots i^{e_h-1}\cdots$ be the partition obtained by deleting part $i$ from $\lambda$, then
$$
n(\lambda) -\sum_{k=1}^i (\lambda_k'-1) =\sum_{k=1}^i {\lambda_k'-1\choose 2}+\sum_{k\geq i+1} {\lambda_k'\choose 2}= n(\mu).
$$
Hence, by replacing $e_h$ with $e_h+1$, the left-hand side of \eqref{eq:qbis}  is equal to
\begin{align*}
&\sum_{i} q^{(p+1)(m-1-i)} \qbin{p+i-l}{p}\sum_{|\mu|=m-i-1 \atop \ell(\mu)\leq n-1} q^{ 2 n(\mu)} {n-1 \brack e_0,e_1\ldots}_q \\
=&\sum_{i} q^{(p+1)(m-1-i)} \qbin{p+i-l}{p} \qbin{n+m-3-i}{n-2}\tag{by \eqref{eq:lemma}},
\end{align*}
which is the right-hand side of \eqref{eq:qbis} by \eqref{eq:simple}.

\rmk
Since the $q$-Chu-Vandermonde identity can be explained bijectively using Ferrers diagram \cite[Chapter~3]{MR1634067}, we can give a {\em bijective proof} of \eqref{eq:qbis}. Here we just sketch such a proof.
Since it is known \cite[Theorem~3.1]{MR1634067} that
$$
\qbin{M+N}{N} = \sum_{\lambda} q^{\abs{\lambda}},
$$
where $\lambda$ runs over partitions in an $M \times N$ rectangle,
the right-hand side of \eqref{eq:qbis} equals the generating function $\sum_{\lambda} q^{\abs{\lambda}}$ for all partitions $\lambda$ in an $(m-1-l) \times (n-1+p)$ rectangle.
The diagram of such a partition $\lambda$ can be decomposed as in Figure~\ref{fig:lattice}.
\begin{figure}[t]
\centering
\begin{pgfpicture}{-2.00mm}{22.14mm}{152.00mm}{95.14mm}
\pgfsetxvec{\pgfpoint{1.00mm}{0mm}}
\pgfsetyvec{\pgfpoint{0mm}{1.00mm}}
\color[rgb]{0,0,0}\pgfsetlinewidth{0.30mm}\pgfsetdash{}{0mm}
\pgfsetdash{{0.30mm}{0.50mm}}{0mm}\pgfmoveto{\pgfxy(48.00,50.00)}\pgflineto{\pgfxy(102.00,50.00)}\pgfstroke
\pgfsetdash{}{0mm}\pgfmoveto{\pgfxy(50.00,30.00)}\pgflineto{\pgfxy(100.00,30.00)}\pgflineto{\pgfxy(100.00,90.00)}\pgflineto{\pgfxy(50.00,90.00)}\pgfclosepath\pgfstroke
\pgfsetdash{{0.30mm}{0.50mm}}{0mm}\pgfmoveto{\pgfxy(70.00,92.00)}\pgflineto{\pgfxy(70.00,28.00)}\pgfstroke
\pgfsetdash{}{0mm}\pgfsetlinewidth{0.15mm}\pgfline{\pgfxy(71.42,70.00)}{\pgfxy(70.00,71.42)}\pgfline{\pgfxy(74.25,70.00)}{\pgfxy(70.00,74.25)}\pgfline{\pgfxy(77.08,70.00)}{\pgfxy(70.00,77.08)}\pgfline{\pgfxy(79.91,70.00)}{\pgfxy(70.00,79.91)}\pgfline{\pgfxy(82.74,70.00)}{\pgfxy(70.00,82.74)}\pgfline{\pgfxy(85.56,70.00)}{\pgfxy(70.00,85.56)}\pgfline{\pgfxy(88.39,70.00)}{\pgfxy(70.00,88.39)}\pgfline{\pgfxy(90.00,71.22)}{\pgfxy(71.22,90.00)}\pgfline{\pgfxy(90.00,74.05)}{\pgfxy(74.05,90.00)}\pgfline{\pgfxy(90.00,76.88)}{\pgfxy(76.88,90.00)}\pgfline{\pgfxy(90.00,79.71)}{\pgfxy(79.71,90.00)}\pgfline{\pgfxy(90.00,82.53)}{\pgfxy(82.53,90.00)}\pgfline{\pgfxy(90.00,85.36)}{\pgfxy(85.36,90.00)}\pgfline{\pgfxy(90.00,88.19)}{\pgfxy(88.19,90.00)}
\pgfsetlinewidth{0.30mm}\pgfmoveto{\pgfxy(70.00,70.00)}\pgflineto{\pgfxy(90.00,70.00)}\pgflineto{\pgfxy(90.00,90.00)}\pgflineto{\pgfxy(70.00,90.00)}\pgfclosepath\pgfstroke
\pgfsetlinewidth{0.15mm}\pgfline{\pgfxy(72.11,58.00)}{\pgfxy(70.00,60.11)}\pgfline{\pgfxy(74.94,58.00)}{\pgfxy(70.00,62.94)}\pgfline{\pgfxy(77.76,58.00)}{\pgfxy(70.00,65.76)}\pgfline{\pgfxy(80.59,58.00)}{\pgfxy(70.00,68.59)}\pgfline{\pgfxy(82.00,59.42)}{\pgfxy(71.42,70.00)}\pgfline{\pgfxy(82.00,62.25)}{\pgfxy(74.25,70.00)}\pgfline{\pgfxy(82.00,65.08)}{\pgfxy(77.08,70.00)}\pgfline{\pgfxy(82.00,67.91)}{\pgfxy(79.91,70.00)}
\pgfsetlinewidth{0.30mm}\pgfmoveto{\pgfxy(70.00,58.00)}\pgflineto{\pgfxy(82.00,58.00)}\pgflineto{\pgfxy(82.00,70.00)}\pgflineto{\pgfxy(70.00,70.00)}\pgfclosepath\pgfstroke
\pgfmoveto{\pgfxy(87.00,79.00)}\pgflineto{\pgfxy(87.00,79.00)}\pgfstroke
\pgfsetlinewidth{0.15mm}\pgfline{\pgfxy(70.45,54.00)}{\pgfxy(70.00,54.45)}\pgfline{\pgfxy(73.28,54.00)}{\pgfxy(70.00,57.28)}\pgfline{\pgfxy(74.00,56.11)}{\pgfxy(72.11,58.00)}
\pgfsetlinewidth{0.30mm}\pgfmoveto{\pgfxy(70.00,54.00)}\pgflineto{\pgfxy(74.00,54.00)}\pgflineto{\pgfxy(74.00,58.00)}\pgflineto{\pgfxy(70.00,58.00)}\pgfclosepath\pgfstroke
\pgfsetlinewidth{0.15mm}\pgfline{\pgfxy(72.00,52.45)}{\pgfxy(70.45,54.00)}
\pgfsetlinewidth{0.30mm}\pgfmoveto{\pgfxy(70.00,52.00)}\pgflineto{\pgfxy(72.00,52.00)}\pgflineto{\pgfxy(72.00,54.00)}\pgflineto{\pgfxy(70.00,54.00)}\pgfclosepath\pgfstroke
\pgfsetlinewidth{0.90mm}\pgfmoveto{\pgfxy(98.00,90.00)}\pgflineto{\pgfxy(50.00,90.00)}\pgflineto{\pgfxy(50.00,32.00)}\pgflineto{\pgfxy(56.00,32.00)}\pgflineto{\pgfxy(56.00,36.00)}\pgflineto{\pgfxy(60.00,36.00)}\pgflineto{\pgfxy(60.00,42.00)}\pgflineto{\pgfxy(68.00,42.00)}\pgflineto{\pgfxy(68.00,46.00)}\pgflineto{\pgfxy(70.00,46.00)}\pgflineto{\pgfxy(70.00,50.00)}\pgflineto{\pgfxy(72.00,50.00)}\pgflineto{\pgfxy(72.00,52.00)}\pgflineto{\pgfxy(74.00,52.00)}\pgflineto{\pgfxy(74.00,54.00)}\pgflineto{\pgfxy(76.00,54.00)}\pgflineto{\pgfxy(76.00,56.00)}\pgflineto{\pgfxy(80.00,56.00)}\pgflineto{\pgfxy(80.00,58.00)}\pgflineto{\pgfxy(84.00,58.00)}\pgflineto{\pgfxy(84.00,62.00)}\pgflineto{\pgfxy(88.00,62.00)}\pgflineto{\pgfxy(88.00,68.00)}\pgflineto{\pgfxy(90.00,68.00)}\pgflineto{\pgfxy(90.00,74.00)}\pgflineto{\pgfxy(94.00,74.00)}\pgflineto{\pgfxy(94.00,82.00)}\pgflineto{\pgfxy(98.00,82.00)}\pgflineto{\pgfxy(98.00,90.00)}\pgflineto{\pgfxy(98.00,90.00)}\pgfstroke
\pgfputat{\pgfxy(60.00,25.00)}{\pgfbox[bottom,left]{\fontsize{11.38}{13.66}\selectfont \makebox[0pt]{$p$}}}
\pgfputat{\pgfxy(48.00,39.00)}{\pgfbox[bottom,left]{\fontsize{11.38}{13.66}\selectfont \makebox[0pt][r]{$i-l$}}}
\pgfputat{\pgfxy(85.00,25.00)}{\pgfbox[bottom,left]{\fontsize{11.38}{13.66}\selectfont \makebox[0pt]{$n-1$}}}
\pgfputat{\pgfxy(48.00,69.00)}{\pgfbox[bottom,left]{\fontsize{11.38}{13.66}\selectfont \makebox[0pt][r]{$m-i-1$}}}
\pgfputat{\pgfxy(60.00,69.00)}{\pgfbox[bottom,left]{\fontsize{11.38}{13.66}\selectfont \makebox[0pt]{$q^{p(m-i-1)}$}}}
\pgfputat{\pgfxy(80.00,79.00)}{\pgfbox[bottom,left]{\fontsize{11.38}{13.66}\selectfont \makebox[0pt]{$q^{\mu_1^2}$}}}
\pgfputat{\pgfxy(75.00,63.00)}{\pgfbox[bottom,left]{\fontsize{11.38}{13.66}\selectfont \makebox[0pt]{$q^{\mu_2^2}$}}}
\pgfputat{\pgfxy(110.00,79.00)}{\pgfbox[bottom,left]{\fontsize{11.38}{13.66}\selectfont ${n-1 \brack \mu_1}_q$}}
\pgfputat{\pgfxy(110.00,63.00)}{\pgfbox[bottom,left]{\fontsize{11.38}{13.66}\selectfont ${\mu_1 \brack \mu_2}_q$}}
\pgfputat{\pgfxy(110.00,54.00)}{\pgfbox[bottom,left]{\fontsize{11.38}{13.66}\selectfont $\vdots$}}
\pgfputat{\pgfxy(71.00,55.00)}{\pgfbox[bottom,left]{\fontsize{11.38}{13.66}\selectfont $\vdots$}}
\pgfputat{\pgfxy(110.00,39.00)}{\pgfbox[bottom,left]{\fontsize{11.38}{13.66}\selectfont ${p+i-l \brack p}_q$}}
\pgfsetlinewidth{0.30mm}\pgfmoveto{\pgfxy(92.00,80.00)}\pgflineto{\pgfxy(108.00,80.00)}\pgfstroke
\pgfmoveto{\pgfxy(108.00,80.00)}\pgflineto{\pgfxy(105.20,80.70)}\pgflineto{\pgfxy(105.20,79.30)}\pgflineto{\pgfxy(108.00,80.00)}\pgfclosepath\pgffill
\pgfmoveto{\pgfxy(108.00,80.00)}\pgflineto{\pgfxy(105.20,80.70)}\pgflineto{\pgfxy(105.20,79.30)}\pgflineto{\pgfxy(108.00,80.00)}\pgfclosepath\pgfstroke
\pgfmoveto{\pgfxy(86.00,64.00)}\pgflineto{\pgfxy(108.00,64.00)}\pgfstroke
\pgfmoveto{\pgfxy(108.00,64.00)}\pgflineto{\pgfxy(105.20,64.70)}\pgflineto{\pgfxy(105.20,63.30)}\pgflineto{\pgfxy(108.00,64.00)}\pgfclosepath\pgffill
\pgfmoveto{\pgfxy(108.00,64.00)}\pgflineto{\pgfxy(105.20,64.70)}\pgflineto{\pgfxy(105.20,63.30)}\pgflineto{\pgfxy(108.00,64.00)}\pgfclosepath\pgfstroke
\pgfmoveto{\pgfxy(58.00,40.00)}\pgflineto{\pgfxy(108.00,40.00)}\pgfstroke
\pgfmoveto{\pgfxy(108.00,40.00)}\pgflineto{\pgfxy(105.20,40.70)}\pgflineto{\pgfxy(105.20,39.30)}\pgflineto{\pgfxy(108.00,40.00)}\pgfclosepath\pgffill
\pgfmoveto{\pgfxy(108.00,40.00)}\pgflineto{\pgfxy(105.20,40.70)}\pgflineto{\pgfxy(105.20,39.30)}\pgflineto{\pgfxy(108.00,40.00)}\pgfclosepath\pgfstroke
\pgfsetlinewidth{0.15mm}\pgfline{\pgfxy(71.62,50.00)}{\pgfxy(70.00,51.62)}
\pgfsetlinewidth{0.30mm}\pgfmoveto{\pgfxy(70.00,50.00)}\pgflineto{\pgfxy(72.00,50.00)}\pgflineto{\pgfxy(72.00,52.00)}\pgflineto{\pgfxy(70.00,52.00)}\pgfclosepath\pgfstroke
\pgfsetlinewidth{0.15mm}\pgfline{\pgfxy(50.00,89.60)}{\pgfxy(50.40,90.00)}\pgfline{\pgfxy(50.00,86.77)}{\pgfxy(53.23,90.00)}\pgfline{\pgfxy(50.00,83.94)}{\pgfxy(56.06,90.00)}\pgfline{\pgfxy(50.00,81.11)}{\pgfxy(58.89,90.00)}\pgfline{\pgfxy(50.00,78.28)}{\pgfxy(61.72,90.00)}\pgfline{\pgfxy(50.00,75.46)}{\pgfxy(64.54,90.00)}\pgfline{\pgfxy(50.00,72.63)}{\pgfxy(67.37,90.00)}\pgfline{\pgfxy(50.00,69.80)}{\pgfxy(70.00,89.80)}\pgfline{\pgfxy(50.00,66.97)}{\pgfxy(70.00,86.97)}\pgfline{\pgfxy(50.00,64.14)}{\pgfxy(70.00,84.14)}\pgfline{\pgfxy(50.00,61.31)}{\pgfxy(70.00,81.31)}\pgfline{\pgfxy(50.00,58.49)}{\pgfxy(70.00,78.49)}\pgfline{\pgfxy(50.00,55.66)}{\pgfxy(70.00,75.66)}\pgfline{\pgfxy(50.00,52.83)}{\pgfxy(70.00,72.83)}\pgfline{\pgfxy(50.00,50.00)}{\pgfxy(70.00,70.00)}\pgfline{\pgfxy(52.83,50.00)}{\pgfxy(70.00,67.17)}\pgfline{\pgfxy(55.66,50.00)}{\pgfxy(70.00,64.34)}\pgfline{\pgfxy(58.49,50.00)}{\pgfxy(70.00,61.51)}\pgfline{\pgfxy(61.31,50.00)}{\pgfxy(70.00,58.69)}\pgfline{\pgfxy(64.14,50.00)}{\pgfxy(70.00,55.86)}\pgfline{\pgfxy(66.97,50.00)}{\pgfxy(70.00,53.03)}\pgfline{\pgfxy(69.80,50.00)}{\pgfxy(70.00,50.20)}
\pgfcircle[fill]{\pgfxy(92.00,80.00)}{1.00mm}
\pgfsetlinewidth{0.30mm}\pgfcircle[stroke]{\pgfxy(92.00,80.00)}{1.00mm}
\pgfcircle[fill]{\pgfxy(86.00,64.00)}{1.00mm}
\pgfcircle[stroke]{\pgfxy(86.00,64.00)}{1.00mm}
\pgfcircle[fill]{\pgfxy(58.00,40.00)}{1.00mm}
\pgfcircle[stroke]{\pgfxy(58.00,40.00)}{1.00mm}
\pgfputat{\pgfxy(49.00,27.00)}{\pgfbox[bottom,left]{\fontsize{11.38}{13.66}\selectfont \makebox[0pt][r]{$(0,0)$}}}
\pgfputat{\pgfxy(101.00,90.00)}{\pgfbox[bottom,left]{\fontsize{11.38}{13.66}\selectfont $(n-1+p,m-1-l)$}}
\pgfmoveto{\pgfxy(50.00,50.00)}\pgflineto{\pgfxy(70.00,50.00)}\pgflineto{\pgfxy(70.00,90.00)}\pgflineto{\pgfxy(50.00,90.00)}\pgfclosepath\pgfstroke
\end{pgfpicture}%
\caption{Decompostion of a partition $\lambda$ in an $(m-1-l) \times (n-1+p)$ rectangle}
\label{fig:lattice}
\end{figure}
Given such a partition $\lambda$, defining $i=m-\lambda'_{p+1}-1$, we take the rectangle of size $(m-i-1) \times p$ from the point $(0,m-1-l)$ in the diagram.
And then associate a partition $\mu=(\mu_1,\mu_2,\ldots)$ of $m-i-1$ by taking the lengths $\mu_j$ of {\em successive Durfee squares}, which are started from the point $(p,m-1-l)$ and taken downwards.
Given $i$ and $\mu$, the generating function $\sum_{\lambda} q^{\abs{\lambda}}$ for all corresponding $\lambda$ is
$$ q^{p(m-i-1) +\mu_1^2 +\mu_2^2 +\mu_3^2 +\cdots}\qbin{p+i-l}{p} \qbin{n-1}{\mu_1} \qbin{\mu_1}{\mu_2} \qbin{\mu_2}{\mu_3} \cdots$$
as indicated by Figure~\ref{fig:lattice} and it follows that
\begin{align*}
&\qbin{n+m-2+p-l}{n-1+p} \\
&= \sum_{i} \sum_{n-1 \geq \mu_1\geq \mu_2\geq\cdots \atop \mu_1 +\mu_2 +\cdots = m-i-1} q^{p(m-i-1) +\mu_1^2 +\mu_2^2 +\mu_3^2 +\cdots}\qbin{p+i-l}{p} \qbin{n-1}{\mu_1} \qbin{\mu_1}{\mu_2} \qbin{\mu_2}{\mu_3} \cdots.
\end{align*}
Replacing $\mu_j$ to $\lambda_j'-1$ for $j\leq i$ (and $\mu_j$ to $\lambda_j'$ for $j>i$), the formula above is equivalent to \eqref{eq:qbis}.
Hence, the successive Durfee square decomposition of a Ferrers diagram gives a bijective proof of \eqref{eq:gen}, \eqref{eq:lemma}, and \eqref{eq:simple}.

\section{An open problem}
By \cite[Eq. (8)]{MR1928786} (see also \cite[Theorem 4]{MR2019276}), we obtain  the generating function for trees with respect to
local indegree type:
\begin{equation}\label{gf}
P_n(x_1,\ldots,x_n)=\sum_{T\in \Tn} \prod_{i=1}^{n} x_i^ {\indeg_T(i)}
= x_n \prod_{i=2}^{n-1} (i x_i + x_{i+1} +\cdots + x_n),
\end{equation}
where $\indeg_T(i)$ is the indegree of vertex$i$ in $T$ with the local orientation.
We say that a monomial $\mathbf{x}^{\boldsymbol{\alpha}} = x_1^{\alpha_1}x_2^{\alpha_2}\ldots x_n^{\alpha_n}$ is of type $\lambda = 1^{e_1} 2^{e_2} \ldots$ if the sequence $\boldsymbol{\alpha}=(\alpha_1,\ldots, \alpha_n)$ has $e_h$ $i$'s for $0< i \le n$.
For any partition $\lambda = 1^{e_1}2^{e_2} \cdots$ of $n-1$ and $e_0 = n-\ell(\lambda)$, from \eqref{first} and \eqref{gf} we derive
\begin{equation}\label{conj}
\sum_{\type(\mathbf{x}^{\boldsymbol{\alpha}})=\lambda}
[\mathbf{x}^{\boldsymbol{\alpha}}] P_n(x_1,\ldots,x_n) =
\dfrac{(n-1)!^2}{e_0!(0!)^{e_0} e_1! (1!)^{e_1} e_2! (2!)^{e_2} \ldots}, 
\end{equation}
where $[\mathbf{x}^{\boldsymbol{\alpha}}] P_n(x_1,\ldots,x_n)$ denotes the coefficient of $\mathbf{x}^{\boldsymbol{\alpha}}$ in $P_n(x_1,\ldots,x_n)$.

For example, if $n=4$, the generating function reads as follows:
$$P_4(x_1,x_2,x_3,x_4)
 = 6 x_2 x_3 x_4 + 2 x_2 x_4^2 + 3 x_3^2 x_4 + 4 x_3 x_4^2 + x_4^3.
$$
Clearly, the monomials of type $\lambda=1^1 2^1$ are $x_2 x_4^2$, $x_3^2 x_4$ and $x_3 x_4^2$ and the sum of their coefficients is
$
2+ 3+ 4 = 9,
$
which coincides with the formula  \eqref{first}, i.e., ${3!^2}/{2!^2} =9$.

\subtitle{Open problem}
Find a {\em direct proof} of the algebraic identity \eqref{conj}.

\subtitle{Acknowledgement}
We are grateful to  the two referees for valuable suggestions on a previous version and Victor Reiner for informing us the two references \cite{MR1928786, MR2019276}. 
This work was partially supported by the Korea Research Foundation Grant funded by the Korean Government(MOEHRD). KRF-2007-357-C00001.


\providecommand{\bysame}{\leavevmode\hbox to3em{\hrulefill}\thinspace}
\providecommand{\href}[2]{#2}

\end{document}